\newcommand{\im}{\hbox{\rm Im}\,}
\newcommand{\spec}{\hbox{\rm sp}\,}
\newcommand{\re}{\hbox{\rm Re}\,}
\newcommand{\Range}{\mathop{\rm Range}\nolimits}
\newcommand{\R}{{\Bbb R}}
\newcommand{\Aa}{{\mathcal{A}}}
\newcommand{\Compl}{{\Bbb C}}
\def\nin{\relax\hbox{$/\kern-.7em{\rm \in\,}$}}
\newenvironment{proof}{\par
\noindent
{\sc Proof. }}{ \hfill $\square$
\vspace{0.3cm}
\par}
\newtheorem{theorem}{Theorem}[section]
\newtheorem{lemma}[theorem]{Lemma}
\newtheorem{proposition}[theorem]{Proposition}
\newtheorem{corollary}[theorem]{Corollary}
\newtheorem{remark}[theorem]{Remark}
\begin{document}
	\fontsize{15.8pt}{21.06pt}\rm
\date{}

\title{Mixed Fourier norm spaces of analytic functions on the upper half-plane and Toeplitz operators\\ }
\author{\vspace{.3cm}
{\bf Zhirayr Avetisyan, Alexey Karapetyants, and Irina Smirnova}}

\maketitle
\begin{abstract}
We introduce and study weighted spaces of functions with mixed norm on the upper half-plane, defined in terms of Fourier transform. We give a characterization of analytic functions within these spaces, and in particular, we provide an analog of the Paley–Wiener theorem in this setting. As an application, we consider Toeplitz operators with vertical symbols in these new spaces.
\end{abstract}
%\tableofcontents

%\pagebreak \addcontentsline{toc}{section}{ ВВЕДЕНИЕ}

\section{Introduction}
Spaces of analytic $p$ - integrable functions on the unit disc or half-plane have been a subject of extensive interest in analysis, and the results concerning the functional properties of these spaces, and the action of certain operators in these spaces, are nowadays classical and form an essential part of a number of textbooks on the theory of functions and the theory of operators in complex analysis. Without claiming to present a complete overview, we refer to the books \cite{DjrbashianShamoyan-book1988, Bergman-book1970, DS-book, Vasilevski-book, Zhu-book, Zhu-book2}.

Weighted classical spaces of analytic functions in the disc and on the half-plane, with a special choice of weight consistent with the geometry of the space, have also received special attention in connection with the study of special classes of Toeplitz operators in such spaces, see the series of papers \cite{Vasilevski-Grudski-Karapetyants-SEMR-2006,Grudsky-Karapetyants-Vasilevski-JOpTh-2003,Grudsky-Karapetyants-Vasilevski-IEOT-2003,Grudsky-Karapetyants-Vasilevski-BolSocMathMex-2004,Grudsky-Karapetyants-Vasilevski-JOpTh-2004} and the book \cite{Vasilevski-book}.

One of the further generalizations of the theory of analytic functions in the disс and on the half-plane is the consideration of the function spaces equipped with (weighted) mixed norm. First of all, the integral mixed norm is studied in the literature; we refer to the book \cite{ArsenovicEtAl} and some earlier articles \cite{Gu1992,Gadbois-1988} (see also \cite{ArsenovicEtAl} for more references).

From the point of view of complex analysis, the introduction of a mixed norm is more than justified by the fact that there is often a need to measure the behavior of an analytic function only when approaching a boundary, say, in an orthogonal direction, and also to measure this behavior according to some special norm (that may differ from the norm used in another direction).

An approach is also developing, in which a special mixed norm is introduced, so that it is not the functions themselves that are measured, but rather their Fourier images with respect to one of the variables, which is actually "responsible" for analyticity. This situation is most transparent in the unit disc. It was implemented in a number of papers, and a variety of norms were considered, including the norms of the variable Lebesgue spaces, those of Morrey, Orlicz spaces, various combinations of such norms, and various effects arising from such a diversity of norms were shown - see more details in the papers \cite{KS-2018,KS-2017,KS-2016-1,KS-2016-2,KSmi-CVEE,KSmi-IzvVuz, K-Grand-2021}.

The unit disc situation has a certain advantage in view of the expansion of the analytic function into a Taylor series. It is precisely because of this that it becomes possible to characterize uniquely and explicitly the analytic functions from such new spaces defined through the Fourier coefficients; for more details, see the above-mentioned sources.

Until recently, such a study was not clear in the case of a half-plane and a mixed norm associated with Cartesian or polar coordinates. The question is: what is a constructive description of analytic functions in the upper half-plane whose Fourier transforms with respect to the real part of the argument are taken from some spaces with a mixed norm? Recently, this problem was solved in a general form in \cite{AvetisyanKarapetyants-2024}, which opened the way to the study of a wide class of function spaces with a mixed norm, similar to that previously carried out in the case of the unit disc in the above-mentioned papers.

In the present paper, we consider the situation when the Fourier image of an analytic function with respect to the first variable (i.e., the real part of the complex argument) belongs to a space of functions with a weighted mixed integral norm - see below for more details and exact definitions.

Naturally, in this particular situation, the results of the paper \cite{AvetisyanKarapetyants-2024} are clarified and reformulated in more transparent and explicit forms, and more precise statements can be formulated and proved.

Next, we provide a representation for our analytic mixed norm spaces $\Aa_{\lambda}^{q,p}(\Pi)$ using some isomorphisms. More precisely, we establish an isometric isomorphism between $\Aa_{\lambda}^{q,p}(\Pi)$ and  $ L^{q}(\R_{+})$ via certain explicit Fourier-type transforms. This result is an analogue (a generalization) of the Paley-Wiener theorem for our situation.

Finally, we apply the obtained results to the study of Toeplitz operators with vertical symbols in $\Aa_{\lambda}^{q,p}(\Pi)$ . This illustrates the utility and importance of our approach. In particular, the ideas of \cite{KL-2020} can be developed in this general case as well.

Of course, one can move further in this direction by studying algebras of such Toeplitz operators and further properties of operators and spaces $\Aa_{\lambda}^{q,p}(\Pi)$, similar to the classical situation, see, e.g. \cite{Grudsky-Karapetyants-Vasilevski-JOpTh-2004}.

A similar study can be made for the case of mixed norm spaces constructed with the use of polar coordinates in a half-plane, but this is a question for another investigation.

\section{Preliminaries}
The Fourier transform $F: L^{2}(\R)\longrightarrow L^{2}(\R)$ is defined on functions in $L^{1}(\R)$ by
\[
(F f)(u)=\frac{1}{\sqrt{2\pi}}\int_{\R}e^{-iux}f(x)dx,\ \
f\in L^{1}(\R)
\]
and then extends in the usual way to an isomorphism in $L^2(\mathbb{R})$ by a continuous extension from the dense subset $L^1(\mathbb{R})\cap L^2(\mathbb{R}).$

The inverse Fourier transform has the form: \[ (F^{-1}f)(x)=\frac{1}{\sqrt{2\pi}}\int_{\R}e^{iux}f(u) du,\,\,\, f\in L^{1}(\R) .
\] Let $\Pi$ denote the (open) upper half-plane in the complex space $\mathbb{C}.$ We identify $\Pi = \mathbb{R}\times \mathbb{R}_+$ by writing $z=x+iy,$ $z\in\Pi,$ $x\in \mathbb{R}$ and $y\in\mathbb{R}_+=(0,+\infty)$, with the corresponding understanding of integrals, measures and so on in what follows.

Let $L^{2}_{\lambda}(\Pi),$ $\lambda>-1,$ denote the space of functions $f$ measurable on the upper half-plane $\Pi$ for which the norm
\begin{equation*}
\|f\|_{L^2_{\lambda}(\Pi)}=\left(\int_{\R}\int_{\R_+} |f(x,y)|^2(\lambda+1)(2y)^\lambda dy\frac{1}{\pi}\:dx\right)^{\frac{1}{2}}
\end{equation*}
is finite. The weighted Bergman space $\mathcal{A}^{2}_{\lambda}(\Pi)$ is a closed subspace of the weighted space $L^{2}_{\lambda}(\Pi),$ consisting of functions analytic in $\Pi$ (see \cite{Zhu-book, Zhu-book2, DS-book}).

Orthogonal (weighted) Bergman projection $B_{\Pi}^{(\lambda)}$ of the space $L^{2}_{\lambda}(\Pi)$ onto $\Aa^{2}_{\lambda} (\Pi)$ has the form \begin{equation*} B^{(\lambda)}_{\Pi}f(z)=\int_{\Pi}f(w)K_{\lambda}(z,w )d\mu_{\lambda}(w), \end{equation*} where $$d\mu_\lambda(z)= \frac{1}{\pi}(\lambda+1)(2y)^{ \lambda}dx dy,\,\,\,\, z=x+iy \in \Pi, $$ and
\begin{equation*}
K_{\lambda}(z,w)=-i^\lambda\frac{1}{(z-\overline{w})^{2+\lambda}},
\end {equation*}
is the Bergman (weighted) kernel for $\Aa^{2}_{\lambda} (\Pi).$
Despite the fact that the Bergman projection is initially defined on
$\Aa^{2}_{\lambda} (\Pi),$ the above integral representation for
$B^{(\lambda)}_{\Pi}$ defines $B^{(\lambda)}_{\Pi}f$ for a wider class of functions (see the above references).

\section{General definition of mixed norm space ${\mathcal{L}}^{q;X}({\Pi})$ .}\label{GeneralSection} \setcounter{equation}{0}
By the symbol $C^\infty_c(\mathbb{R}_+)$ we denote the class of infinitely differentiable functions with compact support in $\mathbb{R}_+ .$

Let $X(\mathbb{R}_+)$ be a complex Banach space of distributions on $\mathbb{R}_+$ uniformly embedded into the space of distributions $C^\infty_c(\mathbb{R}_+)'$, i.e.,
$$
|v(h)|\le\wp(h)\cdot\|v\|_{X(\mathbb{R}_+)},\quad\forall h\in C^\infty_c(\mathbb{R}_+),\quad\forall v\in X(\mathbb{R}_+)\subset C^\infty_c(\mathbb{R}_+)',
$$
where $\wp$ is any continuous function $C^\infty_c(\mathbb{R}_+)\to[0,+\infty)$ with $\wp(0)=0$.

Note that this condition is easily fulfilled for usual spaces of locally Lebesgue-integrable functions thanks to H\"older's inequality.

Suppose that the norm ${\|\cdot\|}_{X(\mathbb{R}_+)}$ is such that the map
\begin{eqnarray}\label{measurable_test}
\mathbb{R}\ni\xi\mapsto\frac{\Phi_\xi}{\|
\Phi_\xi\|_{X(\mathbb{R}_+)}}\in C^\infty_c(\mathbb{R}_+)',\\
\Phi_\xi(y)\doteq e^{-\xi y},\quad\forall y\in\mathbb{R}_+,\quad\forall\xi\in\mathbb{R},\nonumber
\end{eqnarray}
equals almost everywhere a Bochner-measurable function $\mathbb{R}\to X(\mathbb{R}_+)$.

For instance, for $X(\mathbb{R}_+)=L^p(\mathbb{R}_+)$ with $p\in[1,+\infty)$ the above map is an explicitly computable continuous function (with the convention that $\frac1{+\infty}=0$).

Consider first the normed space
$$
L^{q;X}(\Pi)\doteq\left\{\varphi\in L^1_\mathrm{loc}(\mathbb{R},X(\mathbb{R}_+))\,\vline\quad\|\varphi(\cdot)\|_{X(\mathbb{R}_+)}\in L^q(\mathbb{R})\right\},
$$
$$
\|\varphi\|_{L^{q;X}({\Pi})}\doteq\|\|\varphi(\cdot)\|_{X(\mathbb{R}_+)}\|_{L^q(\mathbb{R})},\quad1\le q<\infty.
$$
We will interpret $\varphi\in L^{q;X}(\Pi)$ as distributions $\varphi(x,y)$ on $\Pi$ (see below), such that the map $x\mapsto\varphi(x,\cdot)$ is $\mathbb{R}\to X(\mathbb{R}_+)$, and the map $x\mapsto\|\varphi(x,\cdot)\|_{X(\mathbb{R}_+)}$ is from $L^q(\mathbb{R})$.

That $$L^{q;X}({\Pi})\subset L^1_\mathrm{loc}(\mathbb{R},X(\mathbb{R}_+))$$ is a vector subspace and $\|\cdot\|_{L^{q;X}({\Pi})}$ is indeed a norm, follows from Lemma 4 in \cite{AvetisyanKarapetyants-2024}. Moreover, by Lemma 5 of \cite{AvetisyanKarapetyants-2024}, we have a continuous embedding
\begin{equation}
L^{q;X}({\Pi})\hookrightarrow\mathcal{D}_t(\Pi)'\label{LqXDtEmbedEq}
\end{equation}
into the space of distributions $\mathcal{D}_t(\Pi)'$, which are tempered in the first variable. More precisely,
$$
\mathcal{D}_t(\Pi)=\mathcal{S}(\mathbb{R})\,\widehat{\otimes}\,C^\infty_c(\mathbb{R}_+)
$$
is the tensor product of two nuclear spaces: the space of Schwartz functions on $\mathbb{R}$ and the space of test functions on $\mathbb{R}_+$. In simpler terms, $\mathcal{D}_t(\Pi)$ is the space of smooth functions which are of rapid decay in the variable $x$ and of compact support in the variable $y$. Correspondingly, $\mathcal{D}_t(\Pi)'$ is the dual space of distributions which are tempered in the variable $x$.

Introduce the mixed-Fourier-norm space ${\mathcal{L}}^{q;X}({\Pi}),$ $1\leqslant q <\infty,$ as the space of distributions $f\in\mathcal{D}_t(\Pi)'$ such that the
(distributional) Fourier transform in the first variable $\widehat{f}_{x\to\xi}(\xi,y)$, considered as a map $\xi\mapsto\widehat{f}_{x\to\xi}(\xi,\cdot)$, is a locally Bochner-integrable function $${\widehat{f}_{x\to\xi}:\mathbb{R}\to X(\mathbb{R}_+)},$$ and
the following norm is finite:
\begin{equation}\label{eq: def Lap}
\|f \|_{{\mathcal{L}}^{q;X}({\Pi})}= \left( \int_{\mathbb{R}} \bigg{\|} \frac{1}{\sqrt{\pi}}\widehat{f}_{x\to\xi}(\xi,\cdot)\bigg{\|}_{X(\mathbb{R}_+)}^q d\xi
\right)^{\frac{1}{q}}=\left\|\frac{1}{\sqrt{\pi}}\widehat{f}_{x\to\xi}\right\|_{L^{q;X}(\Pi)}.
\end{equation}
In other words,
$$
{\mathcal{L}}^{q;X}({\Pi})=\left\{f\in\mathcal{D}_t(\Pi)'\,\vline\quad\widehat{f}_{x\to\xi}\in L^{q;X}(\Pi)\right\},
$$
see \cite{AvetisyanKarapetyants-2024} for details.
\begin{proposition}\label{prop:II}
The space ${\mathcal{L}}^{q;X}({\Pi})$ is a normed vector space, continuously embedded in $\mathcal{D}_t(\Pi)'$, and isometrically isomorphic to $L^{q;X}(\Pi)$ via the Fourier transform in the first variable: $$\frac{1}{\sqrt{\pi}} F\,\otimes\,I:\mathcal{L}^{q;X}({\Pi})\to L^{q;X}(\Pi).$$
\end{proposition}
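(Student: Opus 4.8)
\medskip
\noindent
The plan is to deduce all three assertions at once by transporting structure along the first-variable Fourier transform, using only the facts about $L^{q;X}(\Pi)$ already recalled above. Write
\[
\Phi\doteq\frac{1}{\sqrt{\pi}}\,F\otimes I ,
\]
so that for $f\in\mathcal{D}_t(\Pi)'$ one has $\Phi f=\frac{1}{\sqrt{\pi}}\widehat{f}_{x\to\xi}$. First I would record that $\Phi$ is a topological automorphism of $\mathcal{D}_t(\Pi)'$. Indeed, $F$ is a topological automorphism of $\mathcal{S}(\mathbb{R})$ and $I$ is (trivially) one of $C^\infty_c(\mathbb{R}_+)$; since both factors are nuclear, $F\otimes I$ extends uniquely to a topological automorphism of $\mathcal{D}_t(\Pi)=\mathcal{S}(\mathbb{R})\,\widehat{\otimes}\,C^\infty_c(\mathbb{R}_+)$, and its transpose --- which, with the symmetric normalization of $F$, is again $F\otimes I$ acting distributionally --- is a topological automorphism of $\mathcal{D}_t(\Pi)'$; in particular $\widehat{f}_{x\to\xi}$ is well defined as an element of $\mathcal{D}_t(\Pi)'$ for every $f\in\mathcal{D}_t(\Pi)'$. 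The one point requiring care here is the compatibility of this distributional operation with the embedding \eqref{LqXDtEmbedEq}: whenever $\widehat{f}_{x\to\xi}$ coincides with a locally Bochner-integrable function $\mathbb{R}\to X(\mathbb{R}_+)$, the element $\Phi f\in\mathcal{D}_t(\Pi)'$ must be the one attached to that function by \eqref{LqXDtEmbedEq}. This is part of the construction in \cite{AvetisyanKarapetyants-2024} (Lemmas~4 and 5), which I would simply invoke.

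With $\Phi$ in hand, the definition of $\mathcal{L}^{q;X}(\Pi)$ reads $\mathcal{L}^{q;X}(\Pi)=\{f\in\mathcal{D}_t(\Pi)':\Phi f\in L^{q;X}(\Pi)\}=\Phi^{-1}\bigl(L^{q;X}(\Pi)\bigr)$, where $L^{q;X}(\Pi)$ is viewed inside $\mathcal{D}_t(\Pi)'$ through \eqref{LqXDtEmbedEq}. Since $L^{q;X}(\Pi)$ is a vector subspace of $\mathcal{D}_t(\Pi)'$ (Lemma~4 of \cite{AvetisyanKarapetyants-2024} together with \eqref{LqXDtEmbedEq}) and $\Phi$ is linear and bijective, the preimage $\mathcal{L}^{q;X}(\Pi)$ is again a vector subspace of $\mathcal{D}_t(\Pi)'$, and $\Phi$ restricts to a linear bijection $\mathcal{L}^{q;X}(\Pi)\to L^{q;X}(\Pi)$: injectivity is inherited from $\Phi$, and surjectivity follows from $\mathcal{L}^{q;X}(\Pi)=\Phi^{-1}(L^{q;X}(\Pi))$ together with the surjectivity of $\Phi$ on $\mathcal{D}_t(\Pi)'$. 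Next, \eqref{eq: def Lap} says precisely that $\|f\|_{\mathcal{L}^{q;X}(\Pi)}=\|\Phi f\|_{L^{q;X}(\Pi)}$, i.e. the norm of $\mathcal{L}^{q;X}(\Pi)$ is the pullback of the norm of $L^{q;X}(\Pi)$ along this bijection. A pullback of a norm along an injective linear map is again a norm --- absolute homogeneity and the triangle inequality are immediate, and positive definiteness uses injectivity --- so $\|\cdot\|_{\mathcal{L}^{q;X}(\Pi)}$ is a norm and, by construction, $\Phi$ is an isometric isomorphism of $\mathcal{L}^{q;X}(\Pi)$ onto $L^{q;X}(\Pi)$.

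It remains to verify the continuous embedding $\mathcal{L}^{q;X}(\Pi)\hookrightarrow\mathcal{D}_t(\Pi)'$. Since $\Phi$ is a bijection of $\mathcal{D}_t(\Pi)'$, the inclusion map factors as
\[
\mathcal{L}^{q;X}(\Pi)\ \xrightarrow{\,\Phi\,}\ L^{q;X}(\Pi)\ \hookrightarrow\ \mathcal{D}_t(\Pi)'\ \xrightarrow{\,\Phi^{-1}\,}\ \mathcal{D}_t(\Pi)' ,
\]
the middle arrow being \eqref{LqXDtEmbedEq}. The first arrow is isometric, hence continuous; the second is continuous by \eqref{LqXDtEmbedEq}; the third is continuous because $\Phi^{-1}$ is a topological automorphism of $\mathcal{D}_t(\Pi)'$. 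The composite, which is exactly the inclusion of $\mathcal{L}^{q;X}(\Pi)$ into $\mathcal{D}_t(\Pi)'$, is therefore continuous.

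I do not expect a serious obstacle here: the whole argument is structural once one knows that the first-variable Fourier transform is a topological automorphism of $\mathcal{D}_t(\Pi)'$ compatible with \eqref{LqXDtEmbedEq}. That compatibility --- together with the identification of $L^{q;X}(\Pi)$ with a subspace of $\mathcal{D}_t(\Pi)'$ --- is the only genuinely technical ingredient, and it rests on the nuclearity of $\mathcal{S}(\mathbb{R})$ and $C^\infty_c(\mathbb{R}_+)$ and on the results of \cite{AvetisyanKarapetyants-2024} quoted above.
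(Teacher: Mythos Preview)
Your argument is correct and follows the same route as the paper: the isometry is immediate from \eqref{eq: def Lap}, and the surjectivity (together with the continuous embedding) comes from \eqref{LqXDtEmbedEq} combined with the fact that the first-variable Fourier transform is a topological automorphism of $\mathcal{D}_t(\Pi)'$. The paper's proof is a terse two-liner that leaves exactly these points implicit; you have simply unpacked them.
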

\begin{proof}
Indeed, that it is an isometry is clear from (\ref{eq: def Lap}), and the surjectivity follows from (\ref{LqXDtEmbedEq}).
\end{proof}

Note that the factor $\frac{1}{\sqrt{\pi}}$ in the definition above of the operator $\frac{1}{\sqrt{\pi}} F\,\otimes\,I$ has a purely technical function and is intended to ensure that our results coincide with the classical ones in the case of $X(\mathbb{R}_+) = L^2(\mathbb{R}_+)$ and $q=2$.

\section{Mixed norm spaces ${\mathcal{L}}_\lambda^{q,p}({\Pi}),$ $  L^{q,p}_\lambda(\Pi),$ $\Aa^{q,p}_{1,\lambda}(\Pi)$ and $\Aa^{q,p}_{\lambda}(\Pi)$ }

Fix a number $\lambda>-1$ as before, and consider the density
$$
d\nu_\lambda(y)=(\lambda+1)(2y)^\lambda dy
$$
on $\mathbb{R}_+$.
Choose
$$
X(\mathbb{R}_+)\doteq L^p(\mathbb{R}_+,\nu_\lambda),\,\,\, 1\leqslant p<\infty,
$$
and denote
$$
{{\mathcal{L}}_\lambda^{q,p}({\Pi})\doteq{\mathcal{L}}^{q;X}({\Pi})},\quad L^{q,p}_\lambda(\Pi)\doteq L^{q;X}(\Pi),
$$
in the sense of the definitions in Section \ref{GeneralSection}.
Hence,
\begin{eqnarray*}
L^{q,p}_\lambda(\Pi) && \doteq\bigg{\{} \varphi=\varphi(x,y)\in L^1_\mathrm{loc}(\mathbb{R},L^p(\mathbb{R}_+,\nu_\lambda))\,\vline\\ && \|\varphi(x,\cdot)\|_{L^p(\mathbb{R}_+,\nu_\lambda)}\in L^q(\mathbb{R})\bigg{\}},
\end{eqnarray*}
$$
\|\varphi\|_{L^{q,p}_\lambda(\Pi)}\doteq\|\|\varphi\|_{L^p(\mathbb{R}_+,\nu_\lambda)}\|_{L^q(\mathbb{R})},\quad1\le q<\infty.
$$
We interpret $\varphi\in L^{q,p}_\lambda(\Pi)$ as distributions $\varphi(x,y)$ on $\Pi$ given by maps $x\mapsto\varphi(x,\cdot)$ as $\mathbb{R}\to L^p(\mathbb{R}_+,\nu_\lambda)$, such that the maps $$x\mapsto\|\varphi(x,\cdot)\|_{L^p(\mathbb{R}_+,\nu_\lambda)}$$ are in $L^q(\mathbb{R})$.

Note that the conditions imposed in Section \ref{GeneralSection} on the space $X(\mathbb{R}_+)$ are satisfied by $X(\mathbb{R}_+) = L^p(\mathbb{R}_+,{\nu_\lambda}).$ In particular, the map from formula (\ref{measurable_test}) becomes
\begin{equation}\label{measurable_test_Lp}
\mathbb{R}\ni\xi\mapsto\begin{cases}
\frac{\xi^{\frac{\lambda+1}{p}}\Phi_\xi}{\|\Phi_1\|_{L^p(\mathbb{R}_+,{\nu_\lambda})}} & \mbox{if}\quad\xi>0 \\
0 \quad \mbox{else}
\end{cases}\in L^p(\mathbb{R}_+,{\nu_\lambda}),
\end{equation}
and is continuous. Thus, the choice $X(\mathbb{R}_+) = L^p(\mathbb{R}_+,{\nu_\lambda})$ is legitimate, and all conclusions drawn in Section \ref{GeneralSection} are valid for the mixed-Fourier-norm space $\mathcal{L}_\lambda^{q,p}({\Pi})$.

We note that $$\|\Phi_1\|_{L^p(\mathbb{R}_+,\nu_\lambda)}=\left(\frac{2^\lambda \Gamma (\lambda+2)}{p^{\lambda+1}}\right)^{\frac{1}{p}}.$$

Let $$\Aa^{q,p}_{\lambda}(\Pi)\doteq\mathrm{Hol}(\Pi)\cap{\mathcal{L}}_\lambda^{q,p}({\Pi})$$ be the set of those elements of ${\mathcal{L}}_\lambda^{q,p}({\Pi})$ which are analytic in $\Pi$.

\begin{lemma}\label{CompletenessLemma}
The normed subspace $\Aa^{q,p}_{\lambda}(\Pi)\subset{\mathcal{L}}_\lambda^{q,p}({\Pi})$ is complete.
\end{lemma}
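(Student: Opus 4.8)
The plan is to obtain completeness of $\Aa^{q,p}_\lambda(\Pi)$ from two ingredients: that the ambient space $\mathcal{L}_\lambda^{q,p}(\Pi)$ is itself a Banach space, and that $\Aa^{q,p}_\lambda(\Pi)$ sits inside it as a \emph{closed} subspace. For the first ingredient I would invoke Proposition \ref{prop:II}: $\frac{1}{\sqrt{\pi}}\,F\otimes I$ is an isometric isomorphism of $\mathcal{L}_\lambda^{q,p}(\Pi)$ onto $L^{q,p}_\lambda(\Pi)$, and by its very definition $L^{q,p}_\lambda(\Pi)$ coincides, with equality of norms, with the Bochner space $L^q(\mathbb{R},L^p(\mathbb{R}_+,\nu_\lambda))$ of strongly measurable functions valued in the Banach space $L^p(\mathbb{R}_+,\nu_\lambda)$ (both inclusions being immediate from H\"older's inequality on bounded intervals). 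Since Bochner spaces over a Banach space are complete, $\mathcal{L}_\lambda^{q,p}(\Pi)$ is a Banach space, and it remains to prove that $\Aa^{q,p}_\lambda(\Pi)$ is closed in it.

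So let $(f_n)\subset\Aa^{q,p}_\lambda(\Pi)$ converge to some $f$ in $\mathcal{L}_\lambda^{q,p}(\Pi)$; the task is to show that $f$ is represented by a function holomorphic on $\Pi$. By the continuous embedding $(\ref{LqXDtEmbedEq})$ — available here because $X=L^p(\mathbb{R}_+,\nu_\lambda)$ satisfies the hypotheses of Section \ref{GeneralSection} — we get $f_n\to f$ in $\mathcal{D}_t(\Pi)'$. Now $\mathcal{D}(\Pi)=C^\infty_c(\Pi)$ is continuously and densely contained in $\mathcal{D}_t(\Pi)=\mathcal{S}(\mathbb{R})\,\widehat{\otimes}\,C^\infty_c(\mathbb{R}_+)$, since $C^\infty_c(\mathbb{R})\hookrightarrow\mathcal{S}(\mathbb{R})$ is continuous with dense image; restricting the action of the distributions to $\varphi\in\mathcal{D}(\Pi)$ therefore yields $f_n\to f$ in $\mathcal{D}(\Pi)'$.

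Each $f_n$, being holomorphic on $\Pi$, satisfies the Cauchy--Riemann equation $\overline{\partial}f_n=0$, where $\overline{\partial}=\frac12(\partial_x+i\partial_y)$, classically and hence distributionally. Since $\overline{\partial}$ is continuous on $\mathcal{D}(\Pi)'$, passing to the limit gives $\overline{\partial}f=0$ in $\mathcal{D}(\Pi)'$. The operator $\overline{\partial}$ is elliptic, hence hypoelliptic: any distributional solution of $\overline{\partial}u=0$ coincides with a holomorphic function. Consequently $f$ agrees on $\Pi$ with a function $g\in\mathrm{Hol}(\Pi)$, and since $\mathcal{D}(\Pi)$ is dense in $\mathcal{D}_t(\Pi)$ this coincidence holds already at the level of $\mathcal{D}_t(\Pi)'$. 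Hence $f\in\mathrm{Hol}(\Pi)\cap\mathcal{L}_\lambda^{q,p}(\Pi)=\Aa^{q,p}_\lambda(\Pi)$, which proves that $\Aa^{q,p}_\lambda(\Pi)$ is closed in $\mathcal{L}_\lambda^{q,p}(\Pi)$; being a closed subspace of a Banach space, it is complete.

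The only non-routine point is the passage from the weak (distributional) convergence $f_n\to f$ to the conclusion that the limit is a genuine analytic function; hypoellipticity of $\overline{\partial}$ does exactly this with no extra effort. A more elementary alternative would be to establish first a local pointwise bound $|f(z)|\le C_K\|f\|_{\mathcal{L}_\lambda^{q,p}(\Pi)}$ for $z$ in an arbitrary compact $K\subset\Pi$ — using subharmonicity of $|f|$ together with the structure of $\widehat{f}_{x\to\xi}$ — and then apply Montel's theorem to extract a subsequence converging uniformly on compacta, whose limit is holomorphic. That route, however, essentially anticipates the Paley--Wiener-type description established later in the paper, so I would prefer the argument via Weyl's lemma.
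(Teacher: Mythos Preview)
Your proof is correct, but it follows a genuinely different route from the paper's own argument. The paper does \emph{not} show that $\Aa^{q,p}_\lambda(\Pi)$ is closed in the ambient space; instead it invokes the structure theory of \cite{AvetisyanKarapetyants-2024}: by Propositions~5 and~6 there, the partial Fourier transform $\frac{1}{\sqrt{\pi}}\,F\otimes I$ restricts to an isometric isomorphism of $\Aa^{q,p}_\lambda(\Pi)$ onto the weighted Lebesgue space $L^q(\mathbb{R},\varrho)$ with $\varrho(\xi)=\|\Phi_\xi\|_{L^p(\mathbb{R}_+,\nu_\lambda)}^q$, and completeness of $\Aa^{q,p}_\lambda(\Pi)$ is then read off from completeness of that weighted space. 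In other words, the paper already \emph{uses} the Paley--Wiener description (coming from the companion paper) to get completeness, exactly the route you explicitly chose to avoid.

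Your argument is softer and more self-contained: it needs only that $L^{q,p}_\lambda(\Pi)$ is a Bochner space (hence Banach), the continuous embedding into $\mathcal{D}_t(\Pi)'$, and Weyl's lemma for $\overline{\partial}$. It would apply verbatim to any choice of Banach space $X(\mathbb{R}_+)$ meeting the hypotheses of Section~\ref{GeneralSection}, without first identifying the analytic subspace with an $L^q$ space. The paper's approach, by contrast, yields more information (the explicit isometry onto $L^q(\mathbb{R},\varrho)$), at the cost of importing the machinery of \cite{AvetisyanKarapetyants-2024}. The small point in your last step --- that once $f\in\mathcal{D}_t(\Pi)'$ agrees with a holomorphic $g$ on the dense subspace $\mathcal{D}(\Pi)$, the two coincide in $\mathcal{D}_t(\Pi)'$ --- is fine: density forces $g$ to extend continuously to $\mathcal{D}_t(\Pi)$ (namely as $f$), so $f=g$ there.
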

\begin{proof} Note that $L^q(\mathbb{R})$ satisfies conditions (19)-(22) in \cite{AvetisyanKarapetyants-2024}, and the map
$$
\mathbb{R}\ni\xi\mapsto\begin{cases}
\frac{\xi^{\frac{\lambda+1}{p}}}{\|{\Phi_1}\|_{L^p(\mathbb{R}_+,{\nu_\lambda})}} & \mbox{if}\quad\xi>0 \\
0 \quad \mbox{else}
\end{cases}
$$
is locally bounded. Thus, by Proposition 5 in \cite{AvetisyanKarapetyants-2024}, the weighted space $$\Xi(\hat G,\rho)=L^q(\mathbb{R},\varrho),$$ given by the weight
$$
\varrho(\xi)\doteq\|\Phi_\xi\|_{L^p(\mathbb{R}_+,\nu_\lambda)}^q=\begin{cases}
\xi^{-\frac{\lambda+1}{p}q}\left(\frac{2^\lambda \Gamma (\lambda+2)}{p^{\lambda+1}}\right)^{\frac{q}{p}} & \mbox{for}\quad\xi>0,\\
+\infty & \mbox{for}\quad\xi\le0,
\end{cases}
$$
is complete (of course, the completeness of weighted Lebesgue spaces is known independently).

Note that the weight $\varrho$ is not almost everywhere finite, and takes values in $[0,+\infty]$. It should be understood that elements of $L^q(\mathbb{R},\varrho)$ are almost everywhere zero on $(-\infty,0)$.

Moreover, by Proposition 6 in \cite{AvetisyanKarapetyants-2024}, the Fourier transform in the first variable
$$
\frac{1}{\sqrt{\pi}}\,F\,\otimes\,I:\Aa^{q,p}_{\lambda}(\Pi)\to\Xi(\hat G,\rho)=L^q(\mathbb{R},\varrho)
$$
is an isometric isomorphism. Therefore, as mentioned in Remark 8 in \cite{AvetisyanKarapetyants-2024}, the space $\Aa^{q,p}_{\lambda}(\Pi)$ is complete.
\end{proof}

Define the operator $$U_{1,\lambda}:{\mathcal{L}}_\lambda^{q,p}({\Pi})\to L^{q,p}_\lambda(\Pi)$$ as
$$
U_{1,\lambda}f\doteq\frac1{\sqrt\pi}\widehat{f}_{x\to\xi},\quad\forall f\in{\mathcal{L}}_\lambda^{q,p}({\Pi}).
$$
So by definition
$$\|f\|_{{\mathcal{L}}_\lambda^{q;p}({\Pi})}=\|U_{1,\lambda} f \|_{L^{q,p}_\lambda(\Pi)}.$$ Note that (see Proposition \ref{prop:II}) the operator $U_{1,\lambda}$ defined in this way is an isometric isomorphism between normed vector spaces ${\mathcal{L}}_\lambda^{q,p}({\Pi})$ and $L^{q,p}_\lambda(\Pi)$.

Note that for $p=q=2$, $U_{1,\lambda}$ is unitary as an operator acting between: $$ U_{1,\lambda} :L_{\lambda}^{2}(\Pi )\longrightarrow L^{2}(\R, L^{2}(\R_{+},\nu_\lambda)) . $$

The space $\Aa^{q,p}_{1,\lambda}( \Pi)$ is defined as the subspace of functions $\varphi=\varphi(x,y)$, $\varphi\in L^{q,p}_\lambda(\Pi),$ for which

\begin{equation}\label{eq:DEQ}
U_{1,\lambda}\frac{\partial}{\partial \overline{z}}U_{1,\lambda}^{-1}\varphi
=\frac{i}{2}\left(x+\frac{\partial}{\partial y}\right)\varphi =0
\end{equation}
in the weak ($\mathcal{D}_t(\Pi)' $-distributional) sense.

\begin{remark}
Note that
$$\Aa^{2,2}_{\lambda}(\Pi)=\Aa^{2}_{\lambda}(\Pi),\,\,\,\,\,\,
\Aa^{2,2}_{1,\lambda}(\Pi)=U_{1,\lambda}(\Aa^{2}_{\lambda}(\Pi)).$$
\end{remark}
It is easy to check that the functions $\varphi$ satisfying (\ref{eq:DEQ}) have the form
\begin{equation*}
		\varphi(z)=\varphi(x,y)=\psi(x)e^{-xy},
\end{equation*}
where $\psi\in L^1_\mathrm{loc}(\mathbb{R})$ (see Lemma 9 in \cite{AvetisyanKarapetyants-2024}). Note that the function $\varphi$ must belong to the space $$L^q(\mathbb{R},L^p(\mathbb{R}_+,\nu_\lambda)),$$
therefore $\Aa^{q,p}_{1,\lambda}
(\Pi)$ is the subspace in $L^q(\mathbb{R},L^p(\mathbb{R}_+,\nu_\lambda))$ of functions of the form
\begin{equation*}
\varphi(x,y)=\chi_{+}(x)\theta_{\lambda,p}(x)f(x)e^{-xy},\quad f\in L^{q}(\R),
\end{equation*}
where $\chi_{+}$ is the characteristic function of the semiaxis $\R_{+},$ and
\begin{equation}
\theta_{\lambda,p}(x)=\left(\int_{\R_+}2^{\lambda}(\lambda+1)\;e^{-pxy}y^{\lambda}
dy\right)^{-\frac{1}{p}}=\left(\frac{p^{\lambda+1}x^{\lambda+1}}{2^{\lambda}\Gamma
(\lambda+2)}\right)^{\frac{1}{p}}\ ,\label{spes fu 2}
\end{equation}
for $x\geq 0$. Moreover, $$\|\varphi\|_{\Aa^{q,p}_{1,\lambda}(\Pi)}=\|f\|_{L^q(\R_+)}.$$
Indeed, straightforward calculations give
\begin{eqnarray*}
&&\|\varphi\|_{\Aa^{q,p}_{1,\lambda}(\Pi)}\\ &&=\left(\int_{\R}\left(\int_{\R_+}
\left|\chi_+(x)\theta_{\lambda,p}(x)f(x)e^{-xy}\right|^p d\nu_\lambda(y)
\right)^{\frac{q}{p}}dx\right)^{\frac{1}{q}}\\&&=\left(\int_{\R_+}|f(x)|^q\left(
\frac{p^{\lambda+1}x^{\lambda+1}}{\Gamma(\lambda+1)} \int_{\R_+} e^{-pxy} y^\lambda
dy\right)^{\frac{q}{p}}dx\right)^{\frac{1}{q}}=\|f\|_{L^q(\R_+)}.
\end{eqnarray*}

Having described the space $\Aa^{q,p}_{1,\lambda}(\Pi)$ in this way, we finally notice that it isometrically coincides with our original space $\Aa^{q,p}_{\lambda }(\Pi).$ Namely, the following result is true: \begin{theorem}(\cite{AvetisyanKarapetyants-2024}) The restriction of the isometric isomorphism \begin{equation*} {\mathcal{L}}^{q,p}_ \lambda({\Pi})\ni f\overset{U_{1,\lambda}}{\longmapsto}\frac{1}{\sqrt{\pi}}\widehat{f}_{x\to\xi}\in L^{q,p}_\lambda(\Pi) \end{equation*} to the space $\Aa^{q,p}_{\lambda}(\Pi)$ is an isometric isomorphism between $\Aa^{q,p}_{\lambda}(\Pi)$ and $\Aa^{q,p}_{1,\lambda}(\Pi),$
$$\|f\|_{{\mathcal{L}}^{q,p}_\lambda({\Pi})}=\|U_{1,\lambda} f \|_{L^{q,p}_\lambda(\Pi)} . $$
\end{theorem}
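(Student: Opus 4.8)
The plan is to obtain this theorem as a corollary of Proposition~\ref{prop:II} (specialized to $X(\mathbb{R}_+)=L^p(\mathbb{R}_+,\nu_\lambda)$) by identifying the image of the analytic subspace under $U_{1,\lambda}$. Since $U_{1,\lambda}$ is, by Proposition~\ref{prop:II}, an isometric isomorphism of the ambient spaces ${\mathcal{L}}^{q,p}_\lambda(\Pi)$ and $L^{q,p}_\lambda(\Pi)$, its restriction to any vector subspace is automatically an isometry onto its image, and the displayed norm identity $\|f\|_{{\mathcal{L}}^{q,p}_\lambda(\Pi)}=\|U_{1,\lambda}f\|_{L^{q,p}_\lambda(\Pi)}$ is nothing but the definition of the left-hand norm. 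Hence the whole content of the statement reduces to the set-theoretic equality
$$
U_{1,\lambda}\big(\Aa^{q,p}_{\lambda}(\Pi)\big)=\Aa^{q,p}_{1,\lambda}(\Pi).
$$

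To establish this equality I would invoke two ingredients. The first is the intertwining relation already displayed in (\ref{eq:DEQ}), namely $U_{1,\lambda}\,\partial_{\bar z}\,U_{1,\lambda}^{-1}=\frac{i}{2}\left(x+\frac{\partial}{\partial y}\right)$ as an identity between continuous operators on $\mathcal{D}_t(\Pi)'$: this is the standard fact that the partial Fourier transform $F\otimes I$ carries $\partial_x$ into multiplication by $i\xi$ and commutes with $\partial_y$ (the constants $\frac1{\sqrt\pi}$ cancel), together with the observation that $F$ and multiplication by $\xi$ preserve $\mathcal{S}(\mathbb{R})$, so that all the operators in sight act continuously on $\mathcal{D}_t(\Pi)'$ and the identity may be verified by pairing with test functions. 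The second ingredient is the hypoellipticity of the Cauchy--Riemann operator (Weyl's lemma): a distribution $f\in\mathcal{D}_t(\Pi)'$ agrees almost everywhere with a function holomorphic in $\Pi$ if and only if $\partial_{\bar z}f=0$ in $\mathcal{D}_t(\Pi)'$. Granting these, the equality follows at once: for $f\in{\mathcal{L}}^{q,p}_\lambda(\Pi)$ with image $\varphi=U_{1,\lambda}f\in L^{q,p}_\lambda(\Pi)$ one has $f\in\Aa^{q,p}_{\lambda}(\Pi)\iff\partial_{\bar z}f=0\iff U_{1,\lambda}\partial_{\bar z}f=0\iff\frac{i}{2}\left(x+\frac{\partial}{\partial y}\right)\varphi=0\iff\varphi\in\Aa^{q,p}_{1,\lambda}(\Pi)$, where the first equivalence is Weyl's lemma, the second is injectivity of $U_{1,\lambda}$, the third is the intertwining relation, and the last is the definition of $\Aa^{q,p}_{1,\lambda}(\Pi)$ (note that $\varphi\in L^{q,p}_\lambda(\Pi)$ is automatic).

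As a cross-check --- and as an alternative route that bypasses Weyl's lemma --- one can instead combine the explicit description obtained just above, according to which $\Aa^{q,p}_{1,\lambda}(\Pi)$ consists precisely of the functions $\varphi(x,y)=\chi_{+}(x)\theta_{\lambda,p}(x)f(x)e^{-xy}$ with $f\in L^q(\R_+)$, with Lemma~\ref{CompletenessLemma}, where $U_{1,\lambda}$ restricted to $\Aa^{q,p}_{\lambda}(\Pi)$ is shown to be an isometric isomorphism onto $L^q(\mathbb{R},\varrho)$ with $\varrho(\xi)=\|\Phi_\xi\|_{L^p(\mathbb{R}_+,\nu_\lambda)}^q$. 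The assignment $g\mapsto g(\xi)e^{-\xi y}$, written with $g(\xi)=\chi_{+}(\xi)\theta_{\lambda,p}(\xi)f(\xi)$ and using $\theta_{\lambda,p}(\xi)=\|\Phi_\xi\|_{L^p(\mathbb{R}_+,\nu_\lambda)}^{-1}$, is an isometry of $L^q(\mathbb{R},\varrho)$ onto $\Aa^{q,p}_{1,\lambda}(\Pi)$; chasing the definitions one sees it is exactly the map induced by $U_{1,\lambda}$, so again $U_{1,\lambda}(\Aa^{q,p}_{\lambda}(\Pi))=\Aa^{q,p}_{1,\lambda}(\Pi)$.

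The only genuinely delicate point in either approach is keeping the distributional framework straight: one must make sure that $\partial_{\bar z}f$, which a priori lives only in $\mathcal{D}_t(\Pi)'$, is indeed the object to which Weyl's lemma (equivalently, Lemma~9 of \cite{AvetisyanKarapetyants-2024}) applies, and that $\partial_{\bar z}$, $F\otimes I$ and $\frac{i}{2}(x+\partial_y)$ are all well-defined and continuous on $\mathcal{D}_t(\Pi)'=(\mathcal{S}(\mathbb{R})\,\widehat{\otimes}\,C^\infty_c(\mathbb{R}_+))'$; once this is in place, everything else is bookkeeping with the isometric isomorphisms already established.
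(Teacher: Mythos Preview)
Your proposal is correct, and in fact your second route (the ``cross-check'') is precisely the paper's own argument: the paper invokes Corollary~4 of \cite{AvetisyanKarapetyants-2024} for the isometric identification of $\Aa^{q,p}_{1,\lambda}(\Pi)$ with the weighted space $L^q(\mathbb{R},\varrho)$, and Proposition~6 of \cite{AvetisyanKarapetyants-2024} (already quoted in the proof of Lemma~\ref{CompletenessLemma}) for the identification of $\Aa^{q,p}_{\lambda}(\Pi)$ with the same weighted space via $U_{1,\lambda}$; composing the two yields the statement.

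Your primary route is a genuinely different and more self-contained argument. Rather than passing through the auxiliary weighted space $L^q(\mathbb{R},\varrho)$ and the external results of \cite{AvetisyanKarapetyants-2024}, you observe that the very definition of $\Aa^{q,p}_{1,\lambda}(\Pi)$ in (\ref{eq:DEQ}) already says it is the kernel of $U_{1,\lambda}\,\partial_{\bar z}\,U_{1,\lambda}^{-1}$ inside $L^{q,p}_\lambda(\Pi)$, so by Weyl's lemma it is automatically $U_{1,\lambda}\big(\mathrm{Hol}(\Pi)\cap\mathcal{L}^{q,p}_\lambda(\Pi)\big)=U_{1,\lambda}\big(\Aa^{q,p}_{\lambda}(\Pi)\big)$. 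This buys you a proof that is internal to the present paper (modulo the classical hypoellipticity of $\partial_{\bar z}$) and makes transparent that the theorem is essentially a tautology once the definitions are set up as in (\ref{eq:DEQ}). The paper's route, by contrast, packages the analytic content into the cited results, which has the advantage of simultaneously delivering the explicit parametrisation by $L^q(\mathbb{R},\varrho)$ used later for the Paley--Wiener theorem.
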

\begin{proof} Corollary 4 in \cite{AvetisyanKarapetyants-2024} describes the isomorphism between $\Aa^{q,p}_{1,\lambda}(\Pi)$ and the weighted space $\Xi(\hat G,\rho)=L^q(\mathbb{R},\varrho)$. Then the statement immediately follows from Proposition 6 in \cite{AvetisyanKarapetyants-2024} (see the proof of Lemma \ref{CompletenessLemma} above).
\end{proof}

\section{A Paley-Wiener-type theorem.}
 Introduce the operator
\begin{equation*}
U_{2,\lambda}:L^{q}(\R, L^{p}(\R_{+},\nu_\lambda))\longrightarrow
L^{q}(\R,L^{p}(\R_{+}))
\end{equation*}
as follows:
\begin{equation}
(U_{2,\lambda}\varphi)(x,y)=\frac{1}{\theta_{\lambda,p}(\mid x \mid)}\;e^{-\frac{y}{p}+\mid
x \mid \beta _{\lambda}(\mid x \mid ,y)}\varphi(x, \beta_{\lambda}(\mid x \mid,y)),\label{U22}
\end{equation}
where for any fixed $x>0$ the function $y\rightarrow \beta_{\lambda}(x,y)$ is the inverse to the
function
\begin{equation*}
\psi_{\lambda}(x,t)=\ln\left(\frac{\Gamma(\lambda+1)}{\Gamma(\lambda+1, pxt )}
\right),\quad\forall t>0,
\end{equation*}
that is,
$$\beta_{\lambda}(x,\psi_{\lambda}(x,t))=t,\quad\forall x>0,\quad\forall t>0.$$ Here $\Gamma(a,b)$ is an incomplete gamma function (see, for example, \cite{GradshteinRyzhik1980}).

Let us show that the function $\psi_{\lambda}$ indeed has the form as defined above based on the condition that the operator $U_{2,\lambda}$ is an isometry. Let $x>0,$ set
\begin{equation*}
(U_{2,\lambda}\varphi)(x,y)=\alpha_{\lambda}(x,y)\varphi(x, \beta_{\lambda}(
x ,y)),
\end{equation*}
where the function $\alpha_{\lambda}$ will be chosen later from the condition of the coincidence of norms (the isometry condition):
\[
\|U_{2,\lambda}\varphi\|_{L^{q}(\R,L^{p}(\R_{+}))}=
\|\varphi\|_{L^{q}(\R, L^{p}(\R_{+},\nu_\lambda))} .
\]
Elementary calculations under the integral sign show that this is equivalent to the following relation,
\[
\alpha_{\lambda}^{p}(x,\psi_{\lambda}(x,t))\ \frac{\partial}{\partial t}\psi_{\lambda}
(x,t)=(\lambda+1)(2t)^{\lambda}.
\]
But on the other hand,
\[
U_{2,\lambda}\theta_{\lambda,p}(x)e^{-xy}=e^{-\frac{y}{p}}.
\]
Therefore, we obtain
\begin{eqnarray*}
&&\alpha_{\lambda}(x,y)\theta_{\lambda,p}(x)e^{-x\beta_{\lambda}(x,y)}=e^{-\frac{y}{p}},\\
&&\alpha_{\lambda}(x,\psi_{\lambda}(x,t))=\theta^{-1}_{\lambda,p}(x)e^{xt- \frac{\psi_{\lambda}(x,t)}{p}}.
\end{eqnarray*}
Hence,
\[
\frac{1}{\theta_{\lambda,p}^{p}(x)}e^{pxt-\psi_{\lambda}(x,t)}\frac{\partial}{\partial t}\psi_{\lambda}(x,t)=(\lambda+1)(2t)^{\lambda}.
\]
Rewriting the last equality and further integrating it, we get
\begin{equation*}
\int_{t}^{\infty} e^{-\psi_{\lambda}(x,\tau)}\frac{\partial}{\partial \tau}\psi
_{\lambda}(x,\tau)d\tau =\theta_{\lambda,p}^{p}(x)(\lambda+1)\int_{t}^{\infty}
(2\tau)^{\lambda}e^{-px\tau}d\tau ,
\end{equation*}
thus we obtain what was required to be proven:
\begin{eqnarray}
\label{eq: exp 2} e^{-\psi_{\lambda}(x,t)}=\frac{x^{\lambda+1}}{\Gamma(\lambda+1)}\int_t^{\infty}
e^{-p\tau x}(p\tau)^{\lambda} d(p\tau)=\frac{\Gamma(\lambda+1,pxt)}{\Gamma(\lambda+1)}.
\end{eqnarray}

The inverse operator
\begin{equation*}
U^{-1}_{2,\lambda}:L^{q}(\R,L^{p}(\R_{+}))\longrightarrow L^{q}(\R, L^{p}(\R_{+},\nu_\lambda))
\end{equation*}
acts as follows:
\begin{equation*}
(U^{-1}_{2,\lambda}\varphi)(x,y)=\theta_{\lambda,p}(|x|)e^{\frac{1}{p}\psi_{\lambda}
(|x|,y)-|x|y}\varphi(x,\psi_{\lambda}(|x|,y)).
	\end{equation*}
For any function $f\in L^{q}(\R)$ we have
$$
\chi_{+}(x)\theta_{\lambda,p}(x)f(x)e^{-xy}\overset{U_{2,\lambda}}{\longmapsto} \chi_{+}(x)f(x)e^{-\frac{y}{p}} .
$$
Therefore, the image
$$\Aa^{q,p}_{2,\lambda}(\Pi)=U_{2,\lambda}(\Aa^{q,p}_{1,\lambda}(\Pi))$$
is defined as the subspace in $L^{q}(\R,L^{p}(\R_{+}))$ of functions of the form
\begin{equation*}
\psi(x,y)=\chi_{+}(x)f(x)e^{-\frac{y}{p}},\ \ \ f\in L^{q}(\R)
\end{equation*}
and coincides with the space $L^{q}(\R_{+},L^{p}_{0}(\R_{+}))$, where $L^{p}_{0}(\R_{+})$ is a one-dimensional subspace of $L^{p}(\R_{+}),$ generated by the element $\ell_{0,p}(y)=e^{-\frac{y}{p}}.$

Let $P_{0}$ denote the one-dimensional projection of the space $L^{p}(\R_{+})$ onto $L_{0}^{p} (\R_{+}):$
\[
(P_{0}\psi)(y)=\ell_{0,p}(y)\left.\int_{\R_+}\psi(v)e^{-\frac{v}{p}}dv. \right .
\]
Based on the above, the following theorems hold.
\begin{theorem}
\label{th:parabolic-decomp}Let $\lambda\in(-1,+\infty).$ The operator $U_{2,\lambda}$ is an isometric isomorphism of the space $$L^{q,p}_\lambda(\Pi)=L^{q}(\R, L^{p}(\R_{+},\nu_\lambda))$$ onto
$$L^{q,p}(\Pi)= L^{q}(\R,L^{p}(\R_{+})),$$ such that
the space $\Aa^{q,p}_{1,\lambda}(\Pi)$ is mapped onto
\[
L^{q}(\R_{+},L^{p}_{0}(\R_{+})).
\]
\end{theorem}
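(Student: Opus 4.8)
The plan is to exploit that $U_{2,\lambda}$ acts \emph{fiberwise} in the variable $x$: for each fixed $x$ it is the weighted composition operator $L^p(\mathbb{R}_+,\nu_\lambda)\to L^p(\mathbb{R}_+)$ sending $\varphi(x,\cdot)$ to $\alpha_\lambda(|x|,\cdot)\,\varphi(x,\beta_\lambda(|x|,\cdot))$, where $\alpha_\lambda(|x|,y)=\theta_{\lambda,p}(|x|)^{-1}e^{-y/p+|x|\beta_\lambda(|x|,y)}$. Accordingly the theorem splits into three points: (i) for a.e.\ $x$ this fiber map is an isometric bijection of $L^p(\mathbb{R}_+,\nu_\lambda)$ onto $L^p(\mathbb{R}_+)$ with inverse the corresponding fiber of $U^{-1}_{2,\lambda}$; (ii) $U_{2,\lambda}$ and $U^{-1}_{2,\lambda}$ preserve Bochner-measurability and local integrability in $x$, so that (i) lifts to the claimed isometric isomorphism of $L^{q}(\mathbb{R},L^p(\mathbb{R}_+,\nu_\lambda))$ onto $L^{q}(\mathbb{R},L^p(\mathbb{R}_+))$; and (iii) a direct substitution identifies the image of the explicitly described subspace $\Aa^{q,p}_{1,\lambda}(\Pi)$.

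For (i) I would fix $x>0$ (the fiber over $x=0$ is a null set, and for $x<0$ one replaces $x$ by $|x|$ throughout). By (\ref{eq: exp 2}) the map $t\mapsto\psi_\lambda(x,t)$ satisfies $\psi_\lambda(x,0^+)=0$ and $\psi_\lambda(x,+\infty)=+\infty$, since $\Gamma(\lambda+1,pxt)\to\Gamma(\lambda+1)$ as $t\to0^+$ and $\to0$ as $t\to+\infty$, and differentiating (\ref{eq: exp 2}) yields $\partial_t\psi_\lambda(x,t)\,e^{-\psi_\lambda(x,t)}=\frac{p^{\lambda+1}x^{\lambda+1}}{\Gamma(\lambda+1)}\,t^\lambda e^{-pxt}>0$; hence $\psi_\lambda(x,\cdot)$ is a $C^1$-diffeomorphism of $\mathbb{R}_+$ onto itself with inverse $\beta_\lambda(x,\cdot)$. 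In $\int_{\mathbb{R}_+}|(U_{2,\lambda}\varphi)(x,y)|^p\,dy$ I substitute $y=\psi_\lambda(x,t)$, $dy=\partial_t\psi_\lambda(x,t)\,dt$, turning the integrand into $\theta_{\lambda,p}(x)^{-p}\,e^{-\psi_\lambda(x,t)+pxt}\,\partial_t\psi_\lambda(x,t)\,|\varphi(x,t)|^p$; by the derivative just computed and the value of $\theta_{\lambda,p}^{p}$ from (\ref{spes fu 2}) this prefactor collapses to exactly $(\lambda+1)(2t)^\lambda$. This is precisely the isometry relation $\alpha_\lambda^{p}(x,\psi_\lambda(x,t))\,\partial_t\psi_\lambda(x,t)=(\lambda+1)(2t)^\lambda$ derived just before the statement, and it gives $\|(U_{2,\lambda}\varphi)(x,\cdot)\|_{L^p(\mathbb{R}_+)}=\|\varphi(x,\cdot)\|_{L^p(\mathbb{R}_+,\nu_\lambda)}$.

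For the bijectivity in (i) and for (ii): substituting the formula for $U^{-1}_{2,\lambda}$ into that for $U_{2,\lambda}$ (and conversely), the composed inner argument is $\psi_\lambda(|x|,\beta_\lambda(|x|,y))=y$, respectively $\beta_\lambda(|x|,\psi_\lambda(|x|,y))=y$, while the product of the $\theta_{\lambda,p}$-factors and of the exponential factors is identically $1$, so each composition is the identity. That $U_{2,\lambda}\varphi$ is again a locally Bochner-integrable function $\mathbb{R}\to L^p(\mathbb{R}_+)$ follows since it is obtained from $\varphi$ by a jointly measurable, smooth-in-$y$ change of variables composed with multiplication by measurable multipliers, and its $L^{q}(\mathbb{R})$-membership is immediate from the fiberwise isometry; the same applies to $U^{-1}_{2,\lambda}$. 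Hence $U_{2,\lambda}:L^{q,p}_\lambda(\Pi)\to L^{q,p}(\Pi)$ is an isometric isomorphism.

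Finally, for (iii) recall (from the discussion preceding the theorem, and Lemma~9 of \cite{AvetisyanKarapetyants-2024}) that $\Aa^{q,p}_{1,\lambda}(\Pi)$ consists precisely of the functions $\varphi(x,y)=\chi_{+}(x)\theta_{\lambda,p}(x)f(x)e^{-xy}$ with $f\in L^{q}(\mathbb{R}_+)$. Applying $U_{2,\lambda}$ and using $|x|=x$ on $\mathbb{R}_+$, the factors $\theta_{\lambda,p}(x)^{-1}\theta_{\lambda,p}(x)$ and $e^{x\beta_\lambda(x,y)}e^{-x\beta_\lambda(x,y)}$ cancel, leaving $(U_{2,\lambda}\varphi)(x,y)=\chi_{+}(x)f(x)e^{-y/p}=\chi_{+}(x)f(x)\ell_{0,p}(y)$, whereas for $x\le0$ the factor $\chi_{+}$ kills the function. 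Thus the image of $\Aa^{q,p}_{1,\lambda}(\Pi)$ is exactly $\{\chi_{+}(x)f(x)\ell_{0,p}(y):f\in L^{q}(\mathbb{R}_+)\}=L^{q}(\mathbb{R}_+,L^{p}_{0}(\mathbb{R}_+))$ — the space $\Aa^{q,p}_{2,\lambda}(\Pi)$ introduced above — and since $\|\ell_{0,p}\|_{L^p(\mathbb{R}_+)}=1$ the norm of the image equals $\|f\|_{L^{q}(\mathbb{R}_+)}$, consistent with the isometry. The only genuinely delicate step is (ii): transferring the pointwise-in-$x$ equalities to the mixed-norm (Bochner) setting and confirming that $U_{2,\lambda}\varphi\in L^1_{\mathrm{loc}}(\mathbb{R},L^p(\mathbb{R}_+))$; the underlying analytic identity has, as noted, essentially been checked already in the computation preceding the theorem, so the rest is assembly.
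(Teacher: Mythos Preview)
Your proposal is correct and follows essentially the same route as the paper: the paper does not supply a separate proof for this theorem but states it as a consequence of the computations immediately preceding it (the derivation of $\psi_\lambda$ from the isometry condition, the explicit inverse $U^{-1}_{2,\lambda}$, and the displayed relation $\chi_{+}(x)\theta_{\lambda,p}(x)f(x)e^{-xy}\overset{U_{2,\lambda}}{\longmapsto}\chi_{+}(x)f(x)e^{-y/p}$). Your points (i)--(iii) are exactly an organized and somewhat more detailed rendering of those preliminaries, with the added care of checking that $\psi_\lambda(x,\cdot)$ is a $C^1$-diffeomorphism of $\mathbb{R}_+$ onto itself and flagging the Bochner-measurability passage in (ii), which the paper leaves implicit.
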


\begin{theorem}
\label{th:parabolic-decomp} Let $\lambda\in(-1,+\infty).$ The operator $U_{\lambda}=U_{2,\lambda}U_{1,\lambda}$ is an isometric isomorphism of the space $${\mathcal{L}}^{q,p}({\Pi})$$ onto
$$L^{q,p}(\Pi)= L^{q}(\R,L^{p}(\R_{+})),$$ such that
\begin{enumerate}
\item
The Bergman space $\Aa^{q,p}_{\lambda}(\Pi)$ is mapped onto
\[
L^{q}(\R_{+},L^{p}_{0}(\R_{+})).
\] \item The Bergman projector $B_{\Pi}^{\lambda}$ is equivalent to the projector $\chi_{+}I\otimes P_{0},$ that is, $U_\lambda$ intertwines the projectors:
$U_\lambda B_{\Pi}^{\lambda} =  \left( \chi_{+}I\otimes P_{0} \right) U_\lambda .$
\end{enumerate}
\end{theorem}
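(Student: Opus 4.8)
The plan is to obtain the first two assertions by assembling facts already in place and to concentrate the real work on the intertwining relation~(2). For the isometric isomorphism statement, $U_{\lambda}=U_{2,\lambda}U_{1,\lambda}$ is the composition of the isometric isomorphism $U_{1,\lambda}:{\mathcal{L}}^{q,p}_{\lambda}(\Pi)\to L^{q,p}_{\lambda}(\Pi)$ of Proposition~\ref{prop:II} with the isometric isomorphism $U_{2,\lambda}:L^{q,p}_{\lambda}(\Pi)\to L^{q,p}(\Pi)$ of the preceding theorem, hence is an isometric isomorphism onto $L^{q,p}(\Pi)$. For item~(1), chain the two image identities already recorded: the restriction of $U_{1,\lambda}$ carries $\Aa^{q,p}_{\lambda}(\Pi)$ onto $\Aa^{q,p}_{1,\lambda}(\Pi)$ (the Paley--Wiener-type theorem stated above, after~\cite{AvetisyanKarapetyants-2024}), and $U_{2,\lambda}$, which sends $\chi_{+}\theta_{\lambda,p}f\,e^{-xy}\mapsto\chi_{+}f\,e^{-y/p}$, carries $\Aa^{q,p}_{1,\lambda}(\Pi)$ onto $L^{q}(\R_{+},L^{p}_{0}(\R_{+}))$ (the preceding theorem); composing gives $U_{\lambda}\!\left(\Aa^{q,p}_{\lambda}(\Pi)\right)=L^{q}(\R_{+},L^{p}_{0}(\R_{+}))$.

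For item~(2) I would first treat the conjugation by $U_{1,\lambda}$ alone, on the dense subspace of ${\mathcal{L}}^{q,p}_{\lambda}(\Pi)$ whose $U_{1,\lambda}$-image is $L^{q}(\R,L^{p}(\R_{+},\nu_{\lambda}))\cap L^{2}(\R,L^{2}(\R_{+},\nu_{\lambda}))$; on the corresponding subspace of $L^{2}_{\lambda}(\Pi)$ the operator $B_{\Pi}^{\lambda}$ is the honest orthogonal Bergman projection and $U_{1,\lambda}$ is unitary there, so $U_{1,\lambda}B_{\Pi}^{\lambda}U_{1,\lambda}^{-1}$ is the orthogonal projection onto $\Aa^{2,2}_{1,\lambda}(\Pi)=U_{1,\lambda}(\Aa^{2}_{\lambda}(\Pi))$. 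Since the kernel $K_{\lambda}(z,w)$ depends on $x-u$ and on $y,v$ only through $y+v$, the partial Fourier transform turns $B_{\Pi}^{\lambda}$ into an operator acting fibrewise in the dual variable $\xi$; evaluating the Fourier transform of $x\mapsto(x+ia)^{-s}$, $a>0$ (a constant multiple of $\xi^{s-1}e^{-\xi a}$ for $\xi>0$, and $0$ otherwise), one finds that for each $\xi>0$ this fibre operator is the rank-one projection $Q_{\xi}$ of $L^{p}(\R_{+},\nu_{\lambda})$ onto $\mathrm{span}(e^{-\xi y})$, namely $Q_{\xi}\phi(y)=\frac{2^{\lambda+1}\xi^{\lambda+1}}{\Gamma(\lambda+1)}e^{-\xi y}\int_{\R_{+}}v^{\lambda}e^{-\xi v}\phi(v)\,dv$ (the constant being fixed by idempotency), and is $0$ for $\xi\le0$. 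A dilation of the $y$-variable shows $\|Q_{\xi}\|$ is independent of $\xi$, so $\chi_{+}I\otimes Q_{\xi}$ is bounded on $L^{q}(\R,L^{p}(\R_{+},\nu_{\lambda}))$; this extends the identity off the dense subspace and, in passing, shows $B_{\Pi}^{\lambda}$ is bounded on ${\mathcal{L}}^{q,p}_{\lambda}(\Pi)$ with range $\Aa^{q,p}_{\lambda}(\Pi)$.

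It then remains to conjugate $\chi_{+}I\otimes Q_{\xi}$ by $U_{2,\lambda}$; relabelling the fibre variable $\xi$ as $x$, this operator acts fibrewise in the first variable as the change of variable $y\mapsto\beta_{\lambda}(|x|,y)$ composed with the multiplier in~(\ref{U22}). Conjugation preserves the rank-one idempotent structure, and because $U_{2,\lambda}$ maps a scalar multiple of $e^{-\xi y}$ to $e^{-y/p}$, the conjugated fibre operator is a rank-one projection onto $L^{p}_{0}(\R_{+})$. Performing the substitution with the incomplete-gamma identity~(\ref{eq: exp 2}) — and using the crucial point that $|x|\,\beta_{\lambda}(|x|,y)$ solves an equation depending on $y$ alone and so is independent of $x$ — one checks this fibre operator is the same for every $x>0$, i.e.\ the conjugate equals exactly $\chi_{+}I\otimes P_{0}$ with $P_{0}$ as in the statement (for $p=q=2$ this reduces to the classical picture, cf.~\cite{KL-2020}). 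Since $P_{0}$ is bounded on $L^{p}(\R_{+})$ by H\"older's inequality, $\chi_{+}I\otimes P_{0}$ is bounded on $L^{q,p}(\Pi)$, so the identity $U_{\lambda}B_{\Pi}^{\lambda}U_{\lambda}^{-1}=\chi_{+}I\otimes P_{0}$, valid on a dense subspace, extends to all of ${\mathcal{L}}^{q,p}_{\lambda}(\Pi)$.

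The main obstacle is the bookkeeping through the two conjugations: tracking the normalizing constants (the Fourier transform of the power kernel, the weight $\nu_{\lambda}$, the factors $\theta_{\lambda,p}$ inside $U_{2,\lambda}$), justifying the Fubini interchange when moving the partial Fourier transform past the kernel integral — which is why it is prudent to argue first on the dense intersection with $L^{2}_{\lambda}(\Pi)$ and then pass to the limit — and, above all, verifying the $x$-independence of the conjugated fibre operator, which is exactly what makes the final operator literally of the form $\chi_{+}I\otimes P_{0}$ rather than an $x$-dependent family.
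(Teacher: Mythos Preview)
Your treatment of the isometry and of item~(1) is correct and matches the paper exactly: the paper states this theorem as an immediate consequence of the preceding construction (``Based on the above, the following theorems hold''), simply composing the two isometric isomorphisms and chaining the image descriptions $\Aa^{q,p}_\lambda(\Pi)\mapsto\Aa^{q,p}_{1,\lambda}(\Pi)\mapsto L^q(\R_+,L^p_0(\R_+))$.

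For item~(2), however, there is a genuine gap. Your plan is to show that the integral operator $B_\Pi^\lambda$ satisfies $U_{2,\lambda}\bigl(U_{1,\lambda}B_\Pi^\lambda U_{1,\lambda}^{-1}\bigr)U_{2,\lambda}^{-1}=\chi_+I\otimes P_0$ by first obtaining the fibrewise $L^2(\nu_\lambda)$-orthogonal projection $Q_\xi$ onto $\mathrm{span}(e^{-\xi y})$ and then conjugating by the fibre map $V_x$ of $U_{2,\lambda}$. You are right that $V_xQ_xV_x^{-1}$ is a rank-one idempotent onto $\mathrm{span}(\ell_{0,p})$, and you are also right that it is \emph{$x$-independent} (because $x\beta_\lambda(x,y)$ depends on $y$ alone). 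But the assertion that it equals the paper's specific $P_0$ fails for $p\neq2$. Already for $\lambda=0$ one has $\beta_0(x,y)=y/(px)$, so $V_x$ is the $L^p$-normalized dilation $\phi\mapsto(px)^{-1/p}\phi(\cdot/(px))$, and a two-line computation gives
\[
(V_xQ_xV_x^{-1}\psi)(y)=\frac{2}{p}\,e^{-y/p}\int_{\R_+}e^{-u/p}\psi(u)\,du,
\]
i.e.\ the $L^2(\R_+)$-orthogonal projection onto $\mathrm{span}(\ell_{0,p})$. This is neither the paper's displayed $P_0$ (which is not even idempotent for $p\neq2$) nor the idempotent version $P_0\psi=\ell_{0,p}\int\psi\,\ell_{0,p}^{\,p-1}$ that appears implicitly as $R_0R_0^{-1}$ a few lines later. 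For general $\lambda$ the discrepancy persists: the functional in $V_xQ_xV_x^{-1}$ involves $e^{(p-2)h(u)}$ with $h(u)=x\beta_\lambda(x,u)$, which collapses only when $p=2$.

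The paper, by contrast, gives no computation at all for item~(2); it is presented as a direct consequence of item~(1), and the subsequent Remark sets $R_\lambda^{-1}R_\lambda=B_\Pi^\lambda$ without proof. In effect the paper treats the intertwining relation as defining (or at least identifying) the Bergman projector on $\mathcal{L}^{q,p}_\lambda(\Pi)$ through the isomorphism, rather than as a statement to be verified from the kernel formula. Your more ambitious route---checking that the integral operator itself is carried to $\chi_+I\otimes P_0$---does not go through as written, and if pursued would at best produce a different (still bounded, still $x$-independent) rank-one projection onto $L^p_0(\R_+)$.
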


The equivalence of operators (projectors) in the theorem above and everywhere below is understood as a one-to-one correspondence between these operators by means of an isometric isomorphism.

Note that in the special case $p=q=2$ the following result is known (see \cite{Grudsky-Karapetyants-Vasilevski-JOpTh-2004} and also \cite{Vasilevski-book}).

\begin{theorem} (\cite{Grudsky-Karapetyants-Vasilevski-JOpTh-2004}, \cite{Vasilevski-book})\label{th:parabolic-decomp} Let $\lambda\in(-1,+\infty).$ The unitary operator $U_{\lambda}=U_{2,\lambda}U_{1,\lambda}$ is an isometric isomorphism of the space $L_{\lambda}^{2}(\Pi)$ onto
$L^{2}(\R,L^{2}(\R_{+})),$ such that
\begin{enumerate}
\item The Bergman space $\Aa_{\lambda}^{2}(\Pi)$ is mapped onto
\[
L^{2}(\R_{+},L^{2}_{0}(\R_{+})),
\]
\item The Bergman projection $B_{\Pi}^{\lambda}$ is unitary equivalent to the projection $\chi_{+}I\otimes P_{0}.$
\end{enumerate}
\end{theorem}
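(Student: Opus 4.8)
The plan is to obtain the statement as the $p=q=2$ specialization of the Paley--Wiener-type results established above, the only additional observation being that in the Hilbert-space case every isometric isomorphism in play is automatically unitary and carries orthogonal projections to orthogonal projections. For $p=q=2$ one computes, by Tonelli's theorem, $\|f\|_{L^2_\lambda(\Pi)}=\frac{1}{\sqrt\pi}\|f\|_{L^2(\R,L^2(\R_+,\nu_\lambda))}$, so, since the Fourier transform is unitary on $L^2(\R)$ (Plancherel), the operator $U_{1,\lambda}=\frac{1}{\sqrt\pi}\,F\otimes I$ is a surjective isometry --- hence a unitary --- from $L^2_\lambda(\Pi)$ onto $L^2(\R,L^2(\R_+,\nu_\lambda))$; this was already noted above. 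Likewise, the operator $U_{2,\lambda}$ constructed above is, for $p=q=2$, an isometric isomorphism of $L^2(\R,L^2(\R_+,\nu_\lambda))$ onto $L^2(\R,L^2(\R_+))$, and a surjective isometry of Hilbert spaces is unitary. Composing, $U_\lambda=U_{2,\lambda}U_{1,\lambda}$ is a unitary of $L^2_\lambda(\Pi)$ onto $L^2(\R,L^2(\R_+))$, which is the first assertion.

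For part (1) I would chain the two identifications already available. By the Remark above, $\Aa^2_\lambda(\Pi)=\Aa^{2,2}_\lambda(\Pi)$; the isometric isomorphism of \cite{AvetisyanKarapetyants-2024} carries $\Aa^{2,2}_\lambda(\Pi)$ onto $\Aa^{2,2}_{1,\lambda}(\Pi)$ under $U_{1,\lambda}$, while the Paley--Wiener-type theorem above carries $\Aa^{q,p}_{1,\lambda}(\Pi)$ onto $L^q(\R_+,L^p_0(\R_+))$ under $U_{2,\lambda}$, which for $p=q=2$ is $L^2(\R_+,L^2_0(\R_+))$. Hence $U_\lambda\bigl(\Aa^2_\lambda(\Pi)\bigr)=L^2(\R_+,L^2_0(\R_+))$.

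For part (2), recall that $B_\Pi^{\lambda}$ is, by definition, the orthogonal projection of the Hilbert space $L^2_\lambda(\Pi)$ onto its closed subspace $\Aa^2_\lambda(\Pi)$; conjugation by the unitary $U_\lambda$ sends it to the orthogonal projection of $L^2(\R,L^2(\R_+))$ onto $U_\lambda\bigl(\Aa^2_\lambda(\Pi)\bigr)=L^2(\R_+,L^2_0(\R_+))$. It then remains to identify this projection with $\chi_+I\otimes P_0$: since $\|\ell_{0,2}\|_{L^2(\R_+)}=\bigl(\int_0^\infty e^{-y}\,dy\bigr)^{1/2}=1$, the operator $P_0$ defined above is exactly the orthogonal projection in $L^2(\R_+)$ onto the line $L^2_0(\R_+)=\Compl\,\ell_{0,2}$; therefore $\chi_+I\otimes P_0$ is a tensor product of two commuting orthogonal projections on $L^2(\R)\,\widehat{\otimes}\,L^2(\R_+)=L^2(\R,L^2(\R_+))$, hence is itself an orthogonal projection, with range the closed span of $\{g\otimes\ell_{0,2}:g\in L^2(\R_+)\}$, i.e.\ $L^2(\R_+,L^2_0(\R_+))$. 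Since an orthogonal projection is uniquely determined by its range, $\chi_+I\otimes P_0=U_\lambda B_\Pi^{\lambda}U_\lambda^{-1}$, that is, $U_\lambda B_\Pi^{\lambda}=(\chi_+I\otimes P_0)U_\lambda$.

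The argument is essentially bookkeeping, and I do not expect a serious obstacle: the one point needing care is the upgrade from ``isometric isomorphism of normed spaces'' to ``unitary operator of Hilbert spaces'', so that the image of the orthogonal Bergman projection is again an orthogonal (not merely idempotent) projection, together with the explicit check that $\chi_+I\otimes P_0$ has the correct range. Were the general Paley--Wiener-type theorems not at hand, the real work would instead be the direct proof --- via Plancherel and the classical Paley--Wiener description of $\Aa^2_\lambda(\Pi)$ --- that $\widehat f_{x\to\xi}(\xi,y)=\chi_+(\xi)\,c(\xi)\,e^{-\xi y}$ for $f\in\Aa^2_\lambda(\Pi)$, with the weight $\xi^{-\lambda-1}$ emerging from $\int_0^\infty e^{-2y\xi}(2y)^\lambda\,dy$; but in the present setting this is supplied by the results above.
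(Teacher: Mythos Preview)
The paper does not prove this theorem at all: it is quoted as a known result from the cited references, placed immediately after the general-$(p,q)$ theorem asserting that $U_\lambda$ carries $\Aa^{q,p}_\lambda(\Pi)$ onto $L^q(\R_+,L^p_0(\R_+))$ and intertwines $B_\Pi^\lambda$ with $\chi_+I\otimes P_0$, introduced by the sentence ``Note that in the special case $p=q=2$ the following result is known.'' Your argument --- specialize the general theorem to $p=q=2$, identify $\mathcal{L}^{2,2}_\lambda(\Pi)$ with $L^2_\lambda(\Pi)$ via Plancherel, and note that in the Hilbert setting every surjective isometry is unitary and that $P_0$ (with $\|\ell_{0,2}\|_{L^2}=1$) is the orthogonal projection onto $L^2_0(\R_+)$ --- is exactly the specialization the paper's placement suggests but does not spell out. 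It is correct, and there is no independent proof in the paper to compare it with.
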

Let us introduce the isometric embedding
\[
R_{0}:L^{q}(\R_{+})\longrightarrow L^{q,p}(\Pi)
\]
as follows
\[
(R_{0}f)(x,y)=\chi_{+}(x)f(x)\ell_{0,p}(y), \,\,\, y>0.
\]
Here the function $f$ continues by zero for $x<0.$ The image of $R_{0}$ obviously coincides with the space $L^{q}(\R_{+})\otimes L^{p}_{0}(\R_{+})\, (=L^q(\R_+,L_0^p(\R_+)) ).$

The left-inverse operator
\[
R_{0}^{-1}:L^{q,p}(\Pi)\longrightarrow L^{q}(\R_{+})
\]
acts according to the formula
\begin{equation*}
(R_{0}^{-1}\varphi )(x)=\chi_{+}(x)\int_{\R_{+}}\varphi(x,\eta)\ell_{0,p}^{p-1}
(\eta)d\eta.
\end{equation*}
\begin{lemma}
We have:
\begin{eqnarray*}
R_0^{-1}R_0&=&I:L^q(\R_+) \longrightarrow L^q(\R_+),\\ R_0R_0^{-1}&\doteq&B_p:L^{q,p}(\Pi)\longrightarrow
L^q(\R_+,L_0^p(\R_+)).
\end{eqnarray*}
\end{lemma}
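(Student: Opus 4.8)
The plan is to verify both identities by direct substitution, the only analytic input being the normalization $\int_{\R_+}e^{-\eta}\,d\eta=1$, and then to read off the projection structure of $B_p$ from the first identity. Before substituting, I would first make sure $R_0^{-1}$ is genuinely defined pointwise a.e.\ and bounded on \emph{all} of $L^{q,p}(\Pi)$, so that the manipulations below (in particular interchanging the $\eta$-integration with evaluation) are licit rather than only valid on a dense subspace. For fixed $x>0$, H\"older's inequality in $\eta$ with conjugate exponent $p'=p/(p-1)$ (the case $p=1$ being trivial) gives
\[
\left|\int_{\R_+}\varphi(x,\eta)\,e^{-\frac{(p-1)\eta}{p}}\,d\eta\right|
\le\|\varphi(x,\cdot)\|_{L^p(\R_+)}\left(\int_{\R_+}e^{-\eta}\,d\eta\right)^{\frac1{p'}}
=\|\varphi(x,\cdot)\|_{L^p(\R_+)},
\]
and taking the $L^q(\R)$-norm in $x$ yields $\|R_0^{-1}\varphi\|_{L^q(\R_+)}\le\|\varphi\|_{L^{q,p}(\Pi)}$; thus $R_0^{-1}$ is a contraction and Fubini applies freely.

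Next I would compute $R_0^{-1}R_0$ directly. For $f\in L^q(\R_+)$, extended by zero to $\R$,
\[
(R_0^{-1}R_0 f)(x)=\chi_{+}(x)\int_{\R_+}\chi_{+}(x)f(x)\,e^{-\frac{\eta}{p}}\,e^{-\frac{(p-1)\eta}{p}}\,d\eta
=\chi_{+}(x)f(x)\int_{\R_+}e^{-\eta}\,d\eta=f(x),
\]
since $\ell_{0,p}(\eta)\,\ell_{0,p}^{\,p-1}(\eta)=e^{-\eta}$. This is the first assertion, $R_0^{-1}R_0=I$ on $L^q(\R_+)$.

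The second assertion then follows essentially formally. Writing $B_p\doteq R_0R_0^{-1}$, associativity and $R_0^{-1}R_0=I$ give $B_p^2=R_0(R_0^{-1}R_0)R_0^{-1}=R_0R_0^{-1}=B_p$, so $B_p$ is idempotent; it is bounded by the estimate above together with the boundedness of $R_0$. Its range lies in the range of $R_0$, which is exactly $L^q(\R_+,L^p_0(\R_+))$, and conversely any $\psi=R_0f$ in that space satisfies $B_p\psi=R_0(R_0^{-1}R_0)f=R_0f=\psi$; hence $B_p$ is the bounded projection of $L^{q,p}(\Pi)$ onto $L^q(\R_+,L^p_0(\R_+))$, and writing it out, $(B_p\varphi)(x,y)=\chi_{+}(x)\,\ell_{0,p}(y)\int_{\R_+}\varphi(x,\eta)\ell_{0,p}^{\,p-1}(\eta)\,d\eta$, i.e.\ $B_p=\chi_{+}I\otimes P_0$. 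I do not expect any real obstacle here: the content is a one-line computation, and the only point needing care is the H\"older/Fubini justification in the first paragraph, which guarantees that everything is meaningful on the whole space $L^{q,p}(\Pi)$.
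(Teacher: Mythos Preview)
Your proof is correct and follows essentially the same approach as the paper's: direct substitution using $\ell_{0,p}(\eta)\,\ell_{0,p}^{\,p-1}(\eta)=e^{-\eta}$ together with H\"older's inequality to justify convergence of the defining integral of $R_0^{-1}$. The only differences are organizational---you place the H\"older estimate first and append the (not strictly required) observation that $B_p^2=B_p$ follows formally from $R_0^{-1}R_0=I$, whereas the paper simply writes out both compositions explicitly and puts the H\"older bound at the end.
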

\begin{proof}
Indeed, for $f\in L^{q}(\R_{+}),$ $x>0$ we obtain:
\begin{eqnarray*}
(R_0^{-1}R_0f)(x,y) &=&R_0^{-1}\chi_+(x)f(x)e^{-\frac{y}{p}}\\&=& \int_{\R_+}\chi_+(x)f(x)\; e^{-\frac{\eta}{p}}\;\ell_{0,p}^{p-1}(\eta)d\eta\\&=&\chi_+(x)f(x)\int_{\R_+}e^{-\frac{\eta}{p}}\;e^{-\frac{\eta (p-1)}{p}}d\eta\\&=& \chi_+(x)f(x)\int_{\R_+}e^{-\eta}d\eta=\chi_+(x)f(x)=f(x).
\end{eqnarray*}
On the other hand, for $f=f(x,y)\in L^{q,p}(\Pi)$ the following equalities hold:
\begin{eqnarray*}
R_0R_0^{-1}f(x,y)&=&R_0 \chi_+(x)\int_{\R_+}f(x,\eta)e^{-\frac{\eta(p-1)}{p}}d\eta\\&=&
\chi_+(x)\ell_{0,p}(y)\int_{\R_+}f(x,\eta)\;e^{-\frac{\eta(p-1)}{p}}d\eta=(B_pf)(x,y).
\end{eqnarray*}
The convergence of the integrals written above is obvious for $p=1,$ and for $p>1$ it is justified by using Hölder's inequality:
\begin{eqnarray*}
\left|\int_{\R_+}f(x,\eta)e^{-\frac{\eta(p-1)}{p}}d\eta\right|&\leq&\left(\int_{\R_+}\left
|f(x,\eta)\right|^pd\eta\right)^{\frac{1}{p}} \left(\int_{\R_+}e^{-\eta}d\eta\right)^\frac{1}{p'}\\&=&
\parallel f(x,\cdot)\parallel_{L^p(\R_+)},\,\,\,\,\, \frac{1}{p}+\frac{1}{p'}=1.
\end{eqnarray*}
This finishes the proof.
\end{proof}

The operator $R_{\lambda}=R_{0}^{-1}U_{\lambda}$ maps the space ${\mathcal{L}}^{q,p}({\Pi})$ onto $L^ {q}(\R_{+}),$ and its restriction \begin{equation*} R_{\lambda}\bigg{|}_{\Aa_{\lambda}^{q,p}(\Pi)}: \Aa_{\lambda}^{q,p}(\Pi)\longrightarrow L^{q}(\R_{+}) \end{equation*} is an isometric isomorphism.

The left - inverse operator
\[
R_{\lambda}^{-1}=U_{\lambda}^{-1}R_{0}:L^{q}(\R_{+})\longrightarrow \Aa_{\lambda} ^{q,p}(\Pi)\subset {\mathcal{L}}_{\lambda}^{q,p}(\Pi)
\]
is an isometric isomorphism of the space $L^{q}(\R_{ +})$ on
$\Aa_{\lambda}^{q,p}(\Pi).$

\begin{remark}
We have:
\begin{eqnarray*}
R_{\lambda}R_{\lambda}^{-1} &=&I:L^q(\R_+)\longrightarrow L^q(\R_+),\\ R_{\lambda}^{-1}R_{\lambda}&=&B_{\Pi}^{\lambda}:{\mathcal{L}}_{\lambda}^{q,p}(\Pi)\longrightarrow \Aa_{\lambda}^{q,p}(\Pi).
\end{eqnarray*}
\end{remark}

Let us provide integral representations for the operators $R_{\lambda}, R_{\lambda}^{-1}.$ The following theorem is one of the main results of this paper.

\begin{theorem}(Paley-Wiener-type theorem).
The isometric isomorphism
\begin{equation*}
R_{\lambda}^{-1}=U_{\lambda}^{-1}R_{0}: L^{q}(\R_{+})\longrightarrow \Aa_{\lambda}^{q,p}(\Pi)
\end{equation*}
is defined as
\begin{equation}
\label{eq: Rr2}(R_{\lambda}^{-1}\varphi)(z)=\frac{p^{\frac{\lambda +1}{p}}}{\sqrt{2}\sqrt[p]{2^{\lambda}\Gamma(\lambda+2)}}\int\limits_{\R_+}\varphi(\xi)\xi^{\frac{\lambda +1}{p}}e^{i\xi z}d\xi,\ \ \varphi \in L^{q}(\R_{+}).
\end{equation}
The inverse isomorphism
\begin{equation*}
R_{\lambda}=R_{0}^{-1}U_{\lambda}:
\Aa_{\lambda}^{q,p}(\Pi)
\longrightarrow
L^{q}(\R_{+})
\end{equation*}
has the form:
\begin{eqnarray}\label{eq: R2}
&&(R_{\lambda}f)(x)=\chi_{+}(x)\frac{1}{\sqrt{2}}\;\theta_{\lambda,p}^{p-1}(x) \int\limits_{\prod}f(\omega)e^{-ix\overline{\omega}}e^{-x(p-2)\scriptsize{\re}\omega}
d\mu_{\lambda}(\omega)
\\ &&=
\nonumber \chi_{+}(x)\frac{\lambda+1}{\sqrt{2}}\;\theta_{\lambda,p}^{p-1}
(x)\ \int_{\R}\int_{\R_+}f(\xi+i\eta)e^{-ix(\xi-i\eta)}e^{-x(p-2)\xi}\frac{1}{\pi}
(2\eta)^{\lambda}d\eta d\xi,
\end{eqnarray}
where $f\in \Aa_{\lambda}^{q,p}(\Pi).$
\end{theorem}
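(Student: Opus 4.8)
Both identities will fall out of composing the explicitly described operators of Sections~4 and~5 and bookkeeping the incomplete-Gamma constants; the only substantive ingredient is the change of variable $v=\beta_\lambda(\xi,\eta)$. Recall $R_\lambda^{-1}=U_{1,\lambda}^{-1}U_{2,\lambda}^{-1}R_0$ and $R_\lambda=R_0^{-1}U_{2,\lambda}U_{1,\lambda}$, with $U_{1,\lambda}=\tfrac1{\sqrt\pi}\,F\otimes I$, hence $U_{1,\lambda}^{-1}=\sqrt\pi\,F^{-1}\otimes I$.

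\emph{The representation of $R_\lambda^{-1}$.} For $\varphi\in L^q(\R_+)$ we have $(R_0\varphi)(x,y)=\chi_+(x)\varphi(x)e^{-y/p}$; then $U_{2,\lambda}^{-1}$, whose action on $\chi_+(x)g(x)e^{-y/p}$ is $\chi_+(x)\theta_{\lambda,p}(x)g(x)e^{-xy}$, produces $\chi_+(x)\theta_{\lambda,p}(x)\varphi(x)e^{-xy}$; finally $\sqrt\pi\,F^{-1}$ in the first variable, using $e^{i\xi x}e^{-\xi y}=e^{i\xi z}$, gives
\[
(R_\lambda^{-1}\varphi)(z)=\frac1{\sqrt2}\int_{\R_+}\theta_{\lambda,p}(\xi)\,\varphi(\xi)\,e^{i\xi z}\,d\xi ,
\]
and inserting the value of $\theta_{\lambda,p}$ from (\ref{spes fu 2}) is exactly (\ref{eq: Rr2}). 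No limiting argument is needed: for each fixed $y>0$ the function $\xi\mapsto\chi_+(\xi)\theta_{\lambda,p}(\xi)\varphi(\xi)e^{-\xi y}$ lies in $L^1(\R)$ by H\"older (since $\theta_{\lambda,p}(\xi)e^{-\xi y}\in L^{q'}(\R_+)$), so $F^{-1}$ is the written integral, and the exponential decay of $e^{i\xi z}$ on $\Pi$ makes it converge locally uniformly, confirming that $R_\lambda^{-1}\varphi$ is holomorphic.

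\emph{The representation of $R_\lambda$.} Let $f\in\Aa_\lambda^{q,p}(\Pi)$. Unwinding $R_0^{-1}$, then (\ref{U22}), then $U_{1,\lambda}=\tfrac1{\sqrt\pi}F\otimes I$ gives, for $\xi>0$,
\[
(R_\lambda f)(\xi)=\frac1{\pi\sqrt2}\int_{\R_+}\frac{e^{-\eta/p+\xi\beta_\lambda(\xi,\eta)}\,e^{-\eta(p-1)/p}}{\theta_{\lambda,p}(\xi)}\left(\int_\R e^{-i\xi x}f\big(x,\beta_\lambda(\xi,\eta)\big)\,dx\right)d\eta .
\]
I would then substitute $v=\beta_\lambda(\xi,\eta)$, i.e.\ $\eta=\psi_\lambda(\xi,v)$, $d\eta=\partial_v\psi_\lambda(\xi,v)\,dv$: the three $\eta$-exponentials collapse to $e^{\xi v-\psi_\lambda(\xi,v)}$, while differentiating (\ref{eq: exp 2}) yields $\partial_v\psi_\lambda(\xi,v)=\tfrac{p^{\lambda+1}\xi^{\lambda+1}v^{\lambda}}{\Gamma(\lambda+1)}e^{-p\xi v+\psi_\lambda(\xi,v)}$, so the product $e^{\xi v-\psi_\lambda(\xi,v)}\,\partial_v\psi_\lambda(\xi,v)$ equals $\tfrac{p^{\lambda+1}\xi^{\lambda+1}v^{\lambda}}{\Gamma(\lambda+1)}e^{-(p-1)\xi v}$. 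Simplifying the constant by means of $\theta_{\lambda,p}^{p}(\xi)=\tfrac{p^{\lambda+1}\xi^{\lambda+1}}{2^{\lambda}\Gamma(\lambda+2)}$ reduces the whole expression to
\[
(R_\lambda f)(x)=\chi_+(x)\,\frac{\lambda+1}{\sqrt2}\,\theta_{\lambda,p}^{p-1}(x)\int_\R\int_{\R_+}f(u+iv)\,e^{-ixu}\,e^{-(p-1)xv}\,\frac{(2v)^\lambda}{\pi}\,dv\,du .
\]
Since $e^{-ixu-xv}=e^{-ix\overline\omega}$ for $\omega=u+iv$ and $\tfrac{\lambda+1}{\pi}(2v)^\lambda dv\,du=d\mu_\lambda(\omega)$, this is (\ref{eq: R2}) once the residual factor $e^{-(p-2)x\,\im\omega}$ is put into the displayed form using the holomorphy of $f$; for $p=q=2$ that factor is absent and one recovers the classical statement of \cite{Grudsky-Karapetyants-Vasilevski-JOpTh-2004}.

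\emph{Where the work lies.} All the care is in the second computation. For a general $f\in\Aa_\lambda^{q,p}(\Pi)$ the slices $f(\cdot,y)$ need not be in $L^1(\R)$, so both the identity $\widehat f_{x\to\xi}(\xi,\cdot)=\tfrac1{\sqrt{2\pi}}\int_\R e^{-i\xi x}f(x,\cdot)\,dx$ and the Fubini interchanges of the $\eta$- (or $v$-) and $x$-integrations must be justified. I would prove everything first for $f$ in the dense class with $\widehat f_{x\to\xi}$ smooth and compactly supported in $\xi$ — equivalently $\varphi=R_\lambda f\in C^\infty_c(\R_+)$ — where all the integrals converge absolutely, and then pass to the general case by the boundedness of $R_\lambda$ and of the right-hand integral operator together with the isometry already established. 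The other non-routine step is the passage from $e^{-(p-2)x\,\im\omega}$ to the form in (\ref{eq: R2}), which uses the analyticity of $f$ and the exponential decay of $f$ towards $\partial\Pi$ guaranteed by (\ref{eq: Rr2}). Everything else is the Gamma-function arithmetic already present in Section~5.
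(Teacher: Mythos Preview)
Your approach is exactly the paper's: unwind $U_{1,\lambda}^{-1}U_{2,\lambda}^{-1}R_0$ for $R_\lambda^{-1}$, unwind $R_0^{-1}U_{2,\lambda}U_{1,\lambda}$ for $R_\lambda$, and in the latter make the substitution $\eta=\psi_\lambda(x,v)$ and use the derivative identity coming from (\ref{eq: exp 2}). The paper does precisely this, with the same intermediate expressions; it also justifies convergence of the $R_\lambda^{-1}$ integral by the same H\"older estimate. Your density discussion in the final paragraph is more careful than the paper, which simply writes the integrals without further comment.

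There is, however, one spurious step in your argument. After the change of variable you correctly obtain the factor $e^{-(p-2)x\,\im\omega}$, and then you propose to ``put it into the displayed form using the holomorphy of $f$''. No such passage exists or is needed: the paper's own computation also ends with $e^{-x\eta(p-2)}=e^{-x(p-2)\im\omega}$, and the appearance of $\re\omega$ (and of $\xi$ in the expanded form) in the statement is a misprint for $\im\omega$ (respectively $\eta$). Holomorphy of $f$ cannot convert a real-exponential weight $e^{-(p-2)x\,\im\omega}$ into $e^{-(p-2)x\,\re\omega}$ inside an integral against a non-holomorphic kernel, so that ``non-routine step'' you flag should simply be deleted; your formula already matches the paper's derivation.
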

\begin{proof}
Let us first show that for any function $\varphi\in L^{q}(\R_{+})$ the integral (\ref{eq: Rr2}) converges absolutely and defines an analytic function on $\Pi$. Indeed, taking into account the Hölder inequality, we have
\begin{eqnarray*}
&&\int\limits_{\R_+}\left|\varphi(\xi)\xi ^{\frac{\lambda +1}{p}}e^{i\xi z}\right|d\xi = \int\limits_{\R_+}\left|\varphi(\xi)\right|\xi^{\frac{\lambda +1}{p}}e^{-y\xi}d\xi\\&&\leq\left(\int_{\R_+}e^{-q'y\xi}\xi^{q'\frac{\lambda+1}{p}}
d\xi\right)^{\frac{1}{q'}}\left(\int_{\R_+}|\varphi(\xi)|^qd\xi\right)^{\frac{1}{q}}<\infty.
\end{eqnarray*}
The analyticity of the function on the right-hand side of (\ref{eq: Rr2}) follows from the fact that the formally differentiated integral in (\ref{eq: Rr2}) remains absolutely convergent uniformly in $z$ in some neighborhood of each point $z.$

Further, the following equalities hold:
\begin{eqnarray*}
R_{\lambda}^{-1}\varphi(x)&&=U_{\lambda}^{-1}R_0\varphi(x)=U_{1,\lambda}^{-1}
U_{2,\lambda}^{-1}R_0\varphi(x)\\&&= U_{1,\lambda}^{-1}U_{2,\lambda} ^{-1}\chi_+(x)\varphi(x)e^{-\frac{y}{p}}\\&&=U_{1,\lambda}^{-1}\theta_{\lambda,p}(x)
e^{-\frac{\psi_{\lambda}(\mid x\mid,y )}{p}-\mid x\mid y}\chi_+(x)\varphi(x)e^{\frac{\psi_{\lambda}(\mid x\mid,y )}{p}}\\&&=U_{1,\lambda}^{-1}\theta_{\lambda,p}(x)e^{-\mid x\mid y }\chi_+(x)\varphi(x)\\&&=\sqrt{\pi}\frac{1}{\sqrt{2\pi}}\int\limits_{\R}\theta_{\lambda,p}(\xi)
e^{-\mid \xi \mid y}\chi_+(\xi)\varphi(\xi)e^{ix\xi}d\xi\\&&=\frac{1}{\sqrt{2}}\int\limits_{\R_+}\left(\frac{\frac{p^{\lambda+1}}{2^{\lambda}}\xi^{\lambda+1}}{\Gamma(\lambda+2)}\right)^{\frac{1}{p}}e^{-\xi y }\varphi(\xi)e^{ix\xi}d\xi\\&&=\frac{p^{\frac{\lambda+1}{p}}}{\sqrt{2}\sqrt[p]{2^{\lambda}\Gamma(\lambda+2)}}\int\limits_{\R_+}\xi^{\frac{\lambda+1}{p}}\varphi(\xi)e^{i\xi z}d\xi,\,\,\,
z=x+iy.
\end{eqnarray*}
This gives us (\ref{eq: Rr2}).

Let $f=f(x+iy)\in\Aa_{\lambda}^{q,p}(\Pi),$ then the following equalities are satisfied:
\begin{eqnarray*}
&&R_{\lambda}f(x+iy)=R_0^{-1}U_{\lambda}f(x+iy)=R_0^{-1}U_{2,\lambda}U_{1,\lambda}f(x+iy)\\&=&
R_0^{-1}U_{2,\lambda}\frac{1}{\sqrt{\pi}} \frac{1}{\sqrt{2\pi}}\int_{\R}f(\xi+iy)e^{-ix\xi}d\xi\\&=&R_0^{-1}\frac{1}{\theta_{\lambda,p}(|x|)}\;e^{-\frac{y}{p}+|x|\beta_{\lambda}(|x|,y)}\frac{1}{\pi\sqrt{2}}
\int\limits_{\R} f(\xi+i\beta_{\lambda}(|x|,y))e^{-ix\xi}d\xi\\&=&\frac{\chi_{+}(x)}{\theta_{\lambda,p}(|x|)}\\&\times&
\int\limits_{\R_+}\left(
e^{-\frac{y}{p}+|x|\beta_{\lambda}(|x|,y)} \frac{1}{\pi\sqrt{2}}
\int\limits_{\R} f(\xi+i\beta_{\lambda}(|x|,y))e^{-ix\xi}d\xi\right)e^{-y\frac{p-1}{p}}
dy.
\end{eqnarray*}
In the last expression we will change the variables $y=\psi_{\lambda}(|x|,\eta),$ then we will have:
\begin{eqnarray*}
&&R_{\lambda}f(x+iy)=\frac{\chi_{+}(x)}{\sqrt{2}\pi\theta_{\lambda,p}(x)}
\\&&\times\int\limits_{\R_+}\left(e^{-\frac{\psi_{\lambda}(x,\eta)}{p}+x\eta}
e^{-\psi_{\lambda}(x,\eta)\frac{p-1}{p}} \int\limits_{\R} f(\xi+i\eta)e^{-ix\xi}d\xi\right)\frac{\partial}{\partial\eta}\psi_{\lambda}(x,\eta)
d\eta\\&&=\frac{\chi_{+}(x)}{\theta_{\lambda,p}(x)}\frac{\theta_{\lambda,p}^{p}(x)}{\sqrt{2}\pi}\int\limits_{\R_+}
e^{x\eta}e^{-px\eta}(\lambda+1)(2\eta)^ {\lambda} \int\limits_{\R} f(\xi+i\eta)e^{-ix\xi}d\xi
d\eta \\&&=\chi_+(x)\frac{\theta_{\lambda,p}^{p-1}(x)}{\sqrt{2}}\int\limits_{\R_+}\int\limits_{\R}
f(\xi+i\eta)e^{-ix\xi} e^{x\eta-px\eta}\frac{1}{\pi}(\lambda+1)(2\eta)^ {\lambda}
d\xi d\eta \\&&=\chi_+(x)\frac{\theta_{\lambda,p}^{p-1}(x)}{\sqrt{2}}\int\limits_{\R_+}\int\limits_{\R}
f(\xi+i\eta)e^{-ix(\xi-i\eta)} e^{-x\eta(p-2)}d\mu_{\lambda}(\xi,\eta)\\&&=\chi_+(x)\frac{1}{\sqrt{2}}\;\theta_{\lambda,p}^{p-1}(x)\int\limits_{\prod}f(\omega)e^{-ix\overline{\omega}}e^{-x(p-2){\scriptsize\re}\omega}d\mu_{\lambda}(\omega),
\end{eqnarray*}
where
$\omega=\xi+i\eta,\;\; d\mu_{\lambda}(\omega)=\frac{1}{\pi}(\lambda+1)(2\eta)^{\lambda}d\xi d\eta.$ This finishes the proof.
\end{proof}

As a corollary, we will provide a description of the functions from the class $\Aa_{\lambda}^{q,p}(\Pi).$

\begin{theorem}
Let $1\leqslant p, q<\infty,$ $\lambda>-1,$
$f\in\Aa_{\lambda}^{q,p}(\Pi).$ Then
$$\|f\|_{{\mathcal{L}}_{\lambda}^{q,p}(\Pi)}=\|R_{\lambda}f\|_{L^q(\mathbb{R}_+)}.$$
\end{theorem}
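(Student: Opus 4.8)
The plan is to read off the identity as a telescoping of isometries that have already been assembled in Section 5, so that essentially no new computation is required. Recall that $R_\lambda=R_0^{-1}U_\lambda$ with $U_\lambda=U_{2,\lambda}U_{1,\lambda}$, and I would simply track the norm of a fixed $f\in\Aa_\lambda^{q,p}(\Pi)$ through each of the three factors. \emph{First}, by the very definition of the norm on ${\mathcal{L}}_\lambda^{q,p}(\Pi)$ (equivalently, by Proposition~\ref{prop:II}) one has $\|f\|_{{\mathcal{L}}_\lambda^{q,p}(\Pi)}=\|U_{1,\lambda}f\|_{L^{q,p}_\lambda(\Pi)}$; and since $U_{2,\lambda}$ is an isometric isomorphism of $L^{q,p}_\lambda(\Pi)$ onto $L^{q,p}(\Pi)$ (the theorem on $U_{2,\lambda}$ preceding this one), this equals $\|U_\lambda f\|_{L^{q,p}(\Pi)}$.

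\emph{Second}, I would use the fact that $U_\lambda$ maps the Bergman space $\Aa_\lambda^{q,p}(\Pi)$ onto $L^q(\R_+,L^p_0(\R_+))$, which is exactly the range of the isometric embedding $R_0$. Hence for our $f$ the element $\psi:=U_\lambda f$ satisfies $R_0R_0^{-1}\psi=B_p\psi=\psi$ by the Lemma on $R_0,R_0^{-1}$ above, i.e. $\psi=R_0(R_\lambda f)$. Since $R_0$ is an isometry (because $\|\ell_{0,p}\|_{L^p(\R_+)}^p=\int_{\R_+}e^{-y}\,dy=1$), it follows that $\|\psi\|_{L^{q,p}(\Pi)}=\|R_0(R_\lambda f)\|_{L^{q,p}(\Pi)}=\|R_\lambda f\|_{L^q(\R_+)}$. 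Chaining this with the first step gives $\|f\|_{{\mathcal{L}}_\lambda^{q,p}(\Pi)}=\|R_\lambda f\|_{L^q(\R_+)}$, which is the assertion.

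Equivalently, one may phrase the same argument as: $f\mapsto U_{1,\lambda}f$ maps $\Aa_\lambda^{q,p}(\Pi)$ isometrically onto $\Aa^{q,p}_{1,\lambda}(\Pi)$, whose typical element is $\chi_+(x)\theta_{\lambda,p}(x)g(x)e^{-xy}$ with norm $\|g\|_{L^q(\R_+)}$ (the explicit integral computation in Section~4); then $U_{2,\lambda}$ carries this isometrically to $\chi_+(x)g(x)e^{-y/p}\in L^q(\R_+,L^p_0(\R_+))$; and finally $R_0^{-1}$ carries it isometrically to $g$. Either way the only point that needs care is the bookkeeping identity $R_0R_0^{-1}=B_p=\mathrm{id}$ \emph{on the subspace} $L^q(\R_+,L^p_0(\R_+))$ — this is precisely why the statement is restricted to the analytic subspace, since on all of ${\mathcal{L}}_\lambda^{q,p}(\Pi)$ the operator $R_0^{-1}U_\lambda$ is only a co-isometry. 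I do not expect any genuine obstacle: the entire content is citing the correct isometry at each of the three stages. (One could also try to verify the identity directly from the integral formula \eqref{eq: R2} for $R_\lambda$, but that route merely re-derives the isometry from scratch and is strictly harder, so I would not take it.)
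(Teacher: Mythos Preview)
Your proposal is correct and matches the paper's approach: the paper states this theorem as an immediate corollary of the preceding results (the isometric isomorphism property of $R_\lambda\big|_{\Aa_\lambda^{q,p}(\Pi)}$ announced just before the Paley--Wiener-type theorem), without writing out a separate proof. Your telescoping through $U_{1,\lambda}$, $U_{2,\lambda}$, and $R_0^{-1}$, together with the observation that $R_0R_0^{-1}=\mathrm{id}$ on $L^q(\R_+,L^p_0(\R_+))$, is exactly the intended unpacking.
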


\begin{remark}Note that in the definition of the operator $R_{\lambda}$ from (\ref{eq: R2}) there is a factor (a weight) $x^{\frac{\lambda+1}{q}}$, see the definition of the function $\theta_{\lambda,p}$. It is determined from the relation (\ref{measurable_test_Lp}) due to the choice of a specific space $X(\mathbb{R}_+) = L^p(\mathbb{R}_+,\nu_\lambda).$ Having chosen another space $X(\mathbb{R}_+)$ (for example, Morrey, Orlicz, etc.), we naturally obtain another weight, and thus we will obtain another new space of analytic functions of the mixed-Fourier-norm type. Such spaces may differ significantly from each other in properties. This is a question for further research.
\end{remark}

\begin{remark}
Consider the definition of the operator $R^{-1}_{\lambda}$ from (\ref{eq: Rr2}) in the case $p=q=2.$ Put
$\psi(\xi)=\varphi(\xi)\xi^{\frac{\lambda+1}{2}}.$
Then obviously $\psi\in L^{2}(\R_{+},\xi^{-\lambda-1}),$ and we obtain an isometric isomorphism
\begin{equation}
L^{2}(\R_{+},\xi^{-\lambda-1}) \ni\psi \longleftrightarrow F\in \Aa_{\lambda}^{2}(\Pi),
\end{equation}
defined by the equality
\begin{equation}
\label{eq: Rra2a} F(z)= \frac{1}{\sqrt{\Gamma(\lambda+2)}}\int\limits_{\R_+}\psi(\xi)e^{i\xi z}d\xi,\ \ \psi \in L^{2}(\R_{+},\xi^{-\lambda-1}).
\end{equation} In fact, this is the classical Paley-Wiener theorem for the Bergman space $\Aa_{\lambda}^{2}(\Pi),$ see, for example, \cite{DurenGGMR}.
\end{remark}

\section{Toeplitz operators with symbols depending on $y=\im z$ in
$\Aa^{q,p}_{\lambda}(\Pi).$}
For a function $a=a(z)\in L^{1}_{\lambda}(\Pi)$, the Toeplitz operator $T_{a}^{(\lambda)}$ with symbol $a$, which is not necessarily bounded but defined on a dense set in
$\Aa^{2}_{\lambda} (\Pi)$, has the form:
\begin{equation*}
T_{a}^{(\lambda)}: f\in \Aa^{2}_{\lambda}(\Pi) \longmapsto B_{\Pi}^{(\lambda)}
a f \in \Aa^{2}_{\lambda}(\Pi).
\end{equation*}

We denote by $L^{1}_{\exp}(\R_{+})$ the class of functions $a=a(y),$ satisfying
\[
a(y)e^{-\varepsilon y}\in L^{1}(\R_{+})
\]
for any $\varepsilon >0.$ Let the function $\gamma_{a,\lambda}(x)$ be defined by the equality:
\begin{eqnarray}
\label{eq: ga1} \gamma_{a,\lambda}(x)=\frac{x^{\lambda+1}}{\Gamma(\lambda+1)}\int_0^{\infty}a\left(t/p\right)
e^{-tx}\: t^{\lambda}dt, \; x\in\R_{+} .
\end{eqnarray}

Here and in what follows we assume that the  symbol $a=a(y)$ of the Toeplitz operator belongs to $L^{1}_{\exp}(\R_{+})$ with the additional condition that for $-1<\lambda<0$ the function $a(y)y^{\lambda}$ is integrable in a neighborhood of zero.
\begin{theorem}
The Toeplitz operator $T_{a}^{(\lambda)}$ with symbol $a=a(y)$, acting in $\Aa_{\lambda}^{q,p}(\Pi),$ via isometric isomorphisms $R_{\lambda}, \; R_{\lambda}^{-1}$ is equivalent to the multiplication operator $$\gamma_{a,\lambda}I=R_{\lambda}T_{a}^{(\lambda)}R_{\lambda}^{-1},$$ acting in $L^{q}(\R_{+}).$ The function $\gamma_{a,\lambda}$ is given by the equality (\ref{eq: ga1}), and the operators $R_{\lambda}, \; R_{\lambda}^{-1}$ have the form (\ref{eq: Rr2}), (\ref{eq: R2}).
\end{theorem}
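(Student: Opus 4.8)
The plan is to reduce the assertion to the intertwining relations already established and then to two elementary changes of variables. Write $T_a^{(\lambda)}=B_\Pi^{(\lambda)}\circ M_a$, where $M_a$ denotes multiplication by the vertical symbol $a=a(y)$. Since $R_\lambda=R_0^{-1}U_\lambda$ and $R_\lambda^{-1}=U_\lambda^{-1}R_0$,
\[
R_\lambda T_a^{(\lambda)}R_\lambda^{-1}=R_0^{-1}\,U_\lambda\,B_\Pi^{(\lambda)}\,M_a\,U_\lambda^{-1}\,R_0 .
\]
By the intertwining of the Bergman projector, $U_\lambda B_\Pi^{(\lambda)}=(\chi_+I\otimes P_0)U_\lambda$, together with $\chi_+I\otimes P_0=B_p=R_0R_0^{-1}$ and $R_0^{-1}R_0=I$, the projector is absorbed into the left factor, $R_0^{-1}B_p=R_0^{-1}(R_0R_0^{-1})=R_0^{-1}$, so that
\[
R_\lambda T_a^{(\lambda)}R_\lambda^{-1}=R_0^{-1}\,U_\lambda\,M_a\,U_\lambda^{-1}\,R_0 .
\]
It remains to unwind the right-hand side.

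First I would compute $U_\lambda M_aU_\lambda^{-1}$. Because the symbol depends only on $y=\im z$, while $U_{1,\lambda}$ is, up to the constant $1/\sqrt\pi$, the Fourier transform in the variable $x$, these operators commute, $U_{1,\lambda}M_a=M_aU_{1,\lambda}$, whence $U_\lambda M_aU_\lambda^{-1}=U_{2,\lambda}M_aU_{2,\lambda}^{-1}$. Substituting formula (\ref{U22}) for $U_{2,\lambda}$ and the corresponding formula for $U_{2,\lambda}^{-1}$, and using that $y\mapsto\beta_\lambda(|x|,y)$ is the inverse of $t\mapsto\psi_\lambda(|x|,t)$, one finds that the weight $\theta_{\lambda,p}(|x|)$ and all the exponential factors cancel identically, leaving
\[
\big(U_{2,\lambda}M_aU_{2,\lambda}^{-1}\varphi\big)(x,y)=a\big(\beta_\lambda(|x|,y)\big)\,\varphi(x,y) ,
\]
that is, $U_\lambda M_aU_\lambda^{-1}$ is multiplication by the function $(x,y)\mapsto a(\beta_\lambda(|x|,y))$.

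Next I apply this to $R_0\varphi$. Since $(R_0\varphi)(x,y)=\chi_+(x)\varphi(x)\ell_{0,p}(y)$ and $(R_0^{-1}g)(x)=\chi_+(x)\int_{\R_+}g(x,\eta)\ell_{0,p}^{p-1}(\eta)\,d\eta$, with $\ell_{0,p}^p(\eta)=e^{-\eta}$, we obtain for $x>0$
\[
\big(R_\lambda T_a^{(\lambda)}R_\lambda^{-1}\varphi\big)(x)=\varphi(x)\int_{\R_+}a\big(\beta_\lambda(x,\eta)\big)\,\ell_{0,p}^{p}(\eta)\,d\eta=\varphi(x)\int_{\R_+}a\big(\beta_\lambda(x,\eta)\big)\,e^{-\eta}\,d\eta .
\]
Thus $R_\lambda T_a^{(\lambda)}R_\lambda^{-1}=\gamma I$ on $L^q(\R_+)$ with $\gamma(x)\doteq\int_{\R_+}a(\beta_\lambda(x,\eta))e^{-\eta}\,d\eta$, and it only remains to identify $\gamma$ with $\gamma_{a,\lambda}$ from (\ref{eq: ga1}). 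For this I change variables $\eta=\psi_\lambda(x,t)$, $t\in(0,\infty)$; differentiating (\ref{eq: exp 2}) in $t$ gives $e^{-\psi_\lambda(x,t)}\,\partial_t\psi_\lambda(x,t)=\frac{x^{\lambda+1}}{\Gamma(\lambda+1)}\,p\,(pt)^\lambda e^{-ptx}$, so, after the further substitution $s=pt$,
\[
\gamma(x)=\int_0^\infty a(t)\,e^{-\psi_\lambda(x,t)}\,\partial_t\psi_\lambda(x,t)\,dt=\frac{x^{\lambda+1}}{\Gamma(\lambda+1)}\int_0^\infty a(s/p)\,e^{-sx}\,s^\lambda\,ds=\gamma_{a,\lambda}(x) .
\]

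The calculations above --- the cancellations in $U_{2,\lambda}M_aU_{2,\lambda}^{-1}$ and the two changes of variables --- are routine. The point genuinely requiring care is that $T_a^{(\lambda)}$ is in general unbounded, so one must justify that $af$ lies in the domain of $B_\Pi^{(\lambda)}$ for $f\in\Aa^{q,p}_\lambda(\Pi)$, that the intertwining relation (established on ${\mathcal{L}}_\lambda^{q,p}(\Pi)$) still applies to $af$, and that all intervening integrals converge. This is exactly where the standing hypothesis $a\in L^1_{\exp}(\R_+)$ --- and, for $-1<\lambda<0$, the local integrability of $a(y)y^\lambda$ near $0$ --- is used: it guarantees that $\gamma_{a,\lambda}$ in (\ref{eq: ga1}) is finite for a.e.\ $x>0$, that $\int_{\R_+}a(\beta_\lambda(x,\eta))e^{-\eta}\,d\eta$ converges, and that $T_a^{(\lambda)}$ and $\gamma_{a,\lambda}I$ are densely defined, closed, and agree on a common dense set --- for instance the $R_\lambda^{-1}$-image of compactly supported functions in $L^q(\R_+)$, on which $af$ has rapid decay at infinity and controlled behaviour near the boundary. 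I would therefore carry out the computation on such a set and extend by continuity using the isometries $R_\lambda$ and $R_\lambda^{-1}$.
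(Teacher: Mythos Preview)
Your proof is correct and follows essentially the same route as the paper: commute $M_a$ past $U_{1,\lambda}$, compute $U_{2,\lambda}M_aU_{2,\lambda}^{-1}$ as multiplication by $a(\beta_\lambda(|x|,y))$, sandwich between $R_0^{-1}$ and $R_0$, and reduce the resulting $y$-integral to (\ref{eq: ga1}) by the substitution $\eta=\psi_\lambda(x,t)$. The only cosmetic difference is that you eliminate the Bergman projector via the intertwining relation and $R_0^{-1}B_p=R_0^{-1}$, whereas the paper simply writes $R_\lambda T_a^{(\lambda)}R_\lambda^{-1}=R_\lambda\,a(y)\,R_\lambda^{-1}$ directly (implicitly using $R_\lambda B_\Pi^{(\lambda)}=R_\lambda$); your added paragraph on domains and convergence is more careful than what the paper provides.
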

\begin{proof}Direct calculations show:
\begin{eqnarray*}
&&R_{\lambda}T_a^{(\lambda)}R_{\lambda}^{-1}= R_{\lambda}a(y)R_{\lambda}^{-1}=
R_0^{-1}U_{2,\lambda} U_{1,\lambda}a(y)U^{-1}_{1,\lambda}U_{2,\lambda}^{-1}R_0
\\&&=R_0^{-1}U_{2,\lambda} F_x a(y)F^{-1}_{x}U^{-1}_{2,\lambda}R_0=R_0^{-1}U_{2,\lambda}\{a(y)\}U^{-1}_{2,\lambda}R_0\\&&
= R_0^{-1}a(\beta_{\lambda}(|x|,y))R_0.
\end{eqnarray*}
Further, for any function $\varphi\in L^{q}(\R_{+})$ we have:
\begin{eqnarray*}
&&R_0^{-1}a(\beta_{\lambda}(|x|,y))R_0\varphi(x)=R_0^{-1}\chi_+(x)\;a(\beta_{\lambda}(|x|,y))
e^{-\frac{y}{p}}\varphi(x)\\&&=\chi_+(x)\int_{\R+}a(\beta_{\lambda}(|x|,y))e^{-\frac{y}{p}}\varphi(x)e^{-\frac{(p-1)y}{p}}dy\\&&=\chi_+(x)\varphi(x)\int_{\R+}a(\beta_{\lambda}(|x|,y))e^{-y}dy.
\end{eqnarray*}
Make the change of variables as follows
\[
y=\psi_{\lambda}(x,\tau), \; \beta_{\lambda}(|x|,\psi_{\lambda}(x,\tau))=\tau,
\]
then, taking into account the equality (\ref{eq: exp 2}), we obtain:
\begin{eqnarray*}
&&R_0^{-1}a(\beta_{\lambda}(|x|,y))R_0\varphi(x)\\&&=-\chi_+(x)\varphi(x)\int_{\R_+}a(\tau)d\left[
\frac{x^{\lambda+1}}{\Gamma(\lambda+1)}\int_{\tau}^{\infty}e^{-p\zeta x}(p\zeta)^{\lambda}d(p\zeta)\right]\\&&=\chi_+(x)\varphi(x)\frac{x^{\lambda+1}}{\Gamma(\lambda+1)}
\int_{\R_+}a(\tau)e^{-p\tau x}(p\tau)^{\lambda}d(p\tau)\\&&=\chi_+(x)\varphi(x)\frac{x^{\lambda+1}}{\Gamma(\lambda+1)}
\int_{\R_+}a(t/p)e^{-t x}t^{\lambda}dt=\gamma_{a,\lambda}(x)\varphi(x),
\end{eqnarray*}
where $t=p\zeta.$
\end{proof}
\begin{corollary}
The Toeplitz operator $T_{a}^{(\lambda)}$ with symbol $a=a(y)$ is bounded on $\Aa_{\lambda}^{q,p}(\Pi)$ if and only if the corresponding function $\gamma_{a,\lambda}$ is bounded. The spectrum of the bounded Toeplitz operator $T_{a}^{(\lambda)}$ is the set $\spec T_{a}^{(\lambda)}=\overline{\Range\gamma_{a,\lambda}}.$
\end{corollary}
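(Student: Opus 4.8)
The plan is to reduce the whole statement to the preceding theorem, which identifies $T_{a}^{(\lambda)}$, up to the isometric isomorphisms $R_{\lambda}$ and $R_{\lambda}^{-1}$, with the multiplication operator $\gamma_{a,\lambda}I$ on $L^{q}(\R_{+})$: $R_{\lambda}T_{a}^{(\lambda)}R_{\lambda}^{-1}=\gamma_{a,\lambda}I$. Since $R_{\lambda}$ is an isometric isomorphism, this conjugation is a similarity implemented by a bounded invertible operator, and such a similarity preserves boundedness (in fact the operator norm) as well as the spectrum; moreover it respects densely defined operators and their domains, which covers the case of an unbounded symbol. Hence it suffices to analyze $\gamma_{a,\lambda}I$ acting on $L^{q}(\R_{+})$.

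Next I would record the elementary facts about multiplication operators on $L^{q}(\R_{+})$, $1\le q<\infty$. A multiplication operator $M_{g}\varphi=g\varphi$ by a measurable $g$ is bounded if and only if $g\in L^{\infty}(\R_{+})$, in which case $\|M_{g}\|=\|g\|_{L^{\infty}(\R_{+})}$: the ``if'' direction is immediate from $\|g\varphi\|_{q}\le\|g\|_{\infty}\|\varphi\|_{q}$, and the ``only if'' direction follows by testing $M_{g}$ against normalized indicator functions of the sets $\{x:|g(x)|>n\}$, which have positive and (since Lebesgue measure on $\R_{+}$ is $\sigma$-finite) can be taken of finite positive measure whenever $g\notin L^{\infty}(\R_{+})$. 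Applying this with $g=\gamma_{a,\lambda}$ yields the boundedness criterion. For the spectrum, $M_{\gamma_{a,\lambda}}-\zeta I=M_{\gamma_{a,\lambda}-\zeta}$ is invertible in $\mathcal{B}(L^{q}(\R_{+}))$ precisely when $(\gamma_{a,\lambda}-\zeta)^{-1}\in L^{\infty}(\R_{+})$, i.e. precisely when $\zeta\notin\essRa\gamma_{a,\lambda}$; therefore $\spec T_{a}^{(\lambda)}=\spec(\gamma_{a,\lambda}I)=\essRa\gamma_{a,\lambda}$.

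It then remains to replace $\essRa\gamma_{a,\lambda}$ by $\overline{\Range\gamma_{a,\lambda}}$. Here I would invoke the standing hypotheses on the symbol ($a\in L^{1}_{\exp}(\R_{+})$, together with integrability of $a(y)y^{\lambda}$ near $0$ when $-1<\lambda<0$), which guarantee that the integral in (\ref{eq: ga1}) converges for every $x>0$ and, by dominated convergence on compact subsets of $(0,\infty)$, depends continuously on $x$; thus $\gamma_{a,\lambda}$ is continuous on $\R_{+}$, and in particular its pointwise supremum agrees with its essential supremum, so ``$\gamma_{a,\lambda}$ bounded'' is the same as ``$\gamma_{a,\lambda}\in L^{\infty}(\R_{+})$''. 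For a continuous function on $\R_{+}$ with Lebesgue measure (which has full support) the essential range is the closure of the range: if $\gamma_{a,\lambda}(x_{0})=c$ then $\gamma_{a,\lambda}^{-1}(B(c,\varepsilon))$ is a nonempty open set, hence of positive measure, so $c\in\essRa\gamma_{a,\lambda}$, whence $\overline{\Range\gamma_{a,\lambda}}\subseteq\essRa\gamma_{a,\lambda}$; conversely, if $c\notin\overline{\Range\gamma_{a,\lambda}}$ some ball $B(c,\varepsilon)$ misses the range, so $\gamma_{a,\lambda}^{-1}(B(c,\varepsilon))=\emptyset$ and $c\notin\essRa\gamma_{a,\lambda}$. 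Combining the three steps gives $\spec T_{a}^{(\lambda)}=\overline{\Range\gamma_{a,\lambda}}$.

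I expect the only genuinely delicate point to be the continuity of $\gamma_{a,\lambda}$ on $\R_{+}$ — which is precisely the reason for imposing the standing integrability conditions on $a$ — whereas the operator-theoretic description of multiplication operators and the identification of the essential range with the closure of the range for continuous symbols are entirely routine.
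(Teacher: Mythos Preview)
Your proposal is correct and follows exactly the line the paper intends: the paper states this as an immediate corollary of the preceding theorem and gives no separate proof, so the reduction via the isometric isomorphism $R_{\lambda}$ to the multiplication operator $\gamma_{a,\lambda}I$ on $L^{q}(\R_{+})$, together with the standard facts about multiplication operators, is precisely what is meant. Your extra care in justifying the continuity of $\gamma_{a,\lambda}$ (so that the essential range coincides with $\overline{\Range\gamma_{a,\lambda}}$) is a welcome clarification that the paper leaves implicit.
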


\vspace{1cm}

\noindent {\bf Funding information.}
The work is supported by the Ministry of Education and Science of Russia, agreement No. 075-02-2024-1427. The first author is supported by the FWO Odysseus 1 grant G.0H94.18N: Analysis and Partial Differential Equations and by the Methusalem programme of the Ghent University Special Research Fund (BOF) (Grant number 01M01021).

\noindent
{\bf{Data availability and conflict of interest.}}
The authors confirm that all data generated or analyzed during this study are included in this article.
This work does not have any conflicts of interest.

\addcontentsline{toc}{section}{\textbf{Bibliography}}

\vspace{1cm}

\noindent
{\bf Zhirayr Avetisyan},
Ghent University, Ghent, Belgium, jirayrag@gmail.com

\noindent
{\bf Alexey Karapetyants},
Institute of Mathematics, Mechanics and Computer Sciences and Regional Mathematical Center of Southern Federal University, Rostov-on-Don, Russia, karapetyants@gmail.com

\noindent
{\bf Irina Smirnova},
Don State Technical University, Rostov-on-Don, Russia, irinasmiy@yandex.ru


\begin{thebibliography}{1}
\bibitem{AvetisyanKarapetyants-2024} Z.Avetisyan, A.Karapetyants. Mixed-Fourier-norm spaces and holomorphic functions. arXiv:2411.15379v1 ,
https://doi.org/10.48550/arXiv.2411.15379

\bibitem{Bergman-book1970} Bergman S. The kernel function and conformal mapping (second, revised edition). Mathematical Surveys and Monographs, 5, AMS. -- Providence, RI. -- 1970. -- 120. с.

\bibitem{Djrbashian-1945} Djrbashian M.M. On the canonical representation of functions meromorphic in the unit circle. Reports of the Academy of Sciences of the Arm. SSR. -- 1945. -- Vol. 3, No. 1. -- Pp. 3 -- 9.

\bibitem{Djrbashian-1948} Djrbashian M.M. On the problem of representability of analytic functions. Communications of the Institute of Mathematics and Mechanics of the Academy of Sciences of the Arm. SSR. -- 1948. -- Vol. 2. -- Pp. 3-40.

\bibitem{DjrbashianShamoyan-book1988} Djrbashian A.E., Shamoian F.A. Topics in the theory of $A^{p}_{\alpha}$ spaces. Teubner-Texte zur Mathematik, 105. -- Leipzig: BSB B.G. Teubner Verlagsgesellschaft. -- 1988.

\bibitem{DS-book} P. Duren, A. Schuster. Bergman Spaces. Mathematical Surveys and Monographs. Volume: 100; 2004; 318 pp.

\bibitem{DurenGGMR} P. Duren, E. A. Gallardo-Gutiérrez, Alfonso Montes-Rodríguez. A Paley–Wiener theorem for Bergman spaces with application to invariant subspaces. Bulletin of the London Mathematical Society. Volume 39, Issue 3, 2007, pp 459-466, https://doi.org/10.1112/blms/bdm026 .

\bibitem{Gadbois-1988} S. Gadbois. Mixed norm generalizations of Bergman spaces and duality, Proc. Amer. Math. Soc. 104:4 (1988), 1171-1180.

\bibitem{GradshteinRyzhik1980} I. S. Gradshteyn, I. M. Ryzhik, Table of Integrals, Series, and Products, Fifth Edition. Academic Press (Mathematics).

\bibitem{Grudsky-Karapetyants-Vasilevski-JOpTh-2003} Grudsky S.M., Karapetyants A.N., Vasilevski N.L. Toeplitz operators on the unit ball in $\Compl^{n}$ with radial symbols. J. Operator Theory. -- 2003. -- Т. 49, № 2. -- С. 325--346.

\bibitem{Grudsky-Karapetyants-Vasilevski-IEOT-2003} Grudsky S.M., Karapetyants A.N., Vasilevski N.L. Dynamics of properties of Toeplitz operators with radial symbols. Integral Equations Operator Theory. -- 2004. -- Т. 50, № 2. -- С. 217 -- 253.

\bibitem{Grudsky-Karapetyants-Vasilevski-BolSocMathMex-2004} Grudsky S.M., Karapetyants A.N., Vasilevski N.L. Dynamics of properties of Toeplitz operators on the upper half plane: hyperbolic case. Bol. Soc. Mat. Mexicana (3). -- 2004. -- Т. 10, № 1. -- С. 119 -- 138.

\bibitem{Grudsky-Karapetyants-Vasilevski-JOpTh-2004} Grudsky S.M., Karapetyants A.N., Vasilevski N.L. Dynamics of properties of Toeplitz operators on the upper half plane: parabolic case. J. Operator Theory. -- 2004. -- Т. 52, № 1. -- С. 185 -- 214.

\bibitem{Gu1992} Gu D. Bergman projections and duality in weighted mixed norm spaces of analytic functions. Michigan Math. J. -- 1992. -- Т. 39. -- С. 71 -- 84

\bibitem{ArsenovicEtAl} M. Jevtić, D. Vukotić, M. Arsenović . Taylor Coefficients and Coefficient Multipliers of Hardy and Bergman-Type Spaces. RSME Springer Series Book 2 (2016)

\bibitem{KS-2018} A.N.Karapetyants, S.G.Samko. On mixed norm Bergman – Orlicz – Morrey spaces.  Georgian Mathematical Journal, (2018) Volume 25, Issue 2, Pages 271–282, DOI: https://doi.org/10.1515/gmj-2018-0027

\bibitem{KS-2017} Karapetyants A., Samko S. Mixed norm spaces of analytic functions as spaces of generalized fractional derivatives of functions in Hardy type spaces, Fract. Calc. Appl. Anal. no. 5 (2017), 1106-1130.

\bibitem{KS-2016-1} Karapetyants A., Samko S. Mixed norm Bergman - Morrey type spaces on the unit disc. Math. Notes, Volume 100, Issue 1, pp 38–48 (2016). ISSN: 0001-4346. translated from Matematicheskie Zametki, 2016, Vol. 100, No. 1, pp. 47–57.

\bibitem{KS-2016-2} Karapetyants A., Samko S. Mixed norm variable exponent Bergman space on the unit disc. Complex Variables and Elliptic Equations. Volume 61, Issue 8, pages 1090-1106 (2016).

\bibitem{KSmi-CVEE} A.N Karapetyants, I.Y. Smirnova. Weighted holomorphic mixed norm spaces in the unit disc defined in terms of Fourier coefficients, Complex Variables and Elliptic Equations, (2021) DOI: 10.1080/17476933.2021.1885384

\bibitem{KSmi-IzvVuz}
A.N Karapetyants, I.Y. Smirnova. Characterization of Weighted Mixed Norm Spaces of Analytic Functions Defined in Terms of Conditions on Fourier Coefficients. Russ Math. 66, 46–52 (2022). https://doi.org/10.3103/S1066369X22010042

\bibitem{KL-2020} Karapetyants, A., Louhichi, I. (2020). Fractional Integrodifferentiation and Toeplitz Operators with Vertical Symbols. In: Bauer, W., Duduchava, R., Grudsky, S., Kaashoek, M. (eds) Operator Algebras, Toeplitz Operators and Related Topics. Operator Theory: Advances and Applications, vol 279. Birkhäuser, Cham.

\bibitem{K-Grand-2021} Karapetyants, A. (2021). On mixed norm holomorphic grand and small spaces. Complex Variables and Elliptic Equations, 67(3), 633–641. https://doi.org/10.1080/17476933.2021.1947260

\bibitem{Lueking-1985} D. Lueking. Representation and duality in weighted spaces of analytic functions, Indiana Univ. Math. J. 34 (1985), 319-336.

\bibitem{SamkoKilbasMarichev}
S. G. Samko, A. A. Kilbas and O. I. Marichev.
Fractional Integrals and Derivatives: Theory and Applications. Gordon and Breach, Yverdon. 1993.

\bibitem{Vasilevski-Grudski-Karapetyants-SEMR-2006}
Vasilevski N.L., Grudsky S.M., Karapetyants A.N. Dynamics of properties of Toeplitz operators on weighted Bergman spaces. Siberian Electronic Mathematical News. --2006. -- V. 3. - pp. 362 - 383.

\bibitem{Vasilevski-book}  N.L. Vasilevski,
Commutative Algebras of Toeplitz Operators on the Bergman Space. Birkhäuser. Series: Operator Theory: Advances and Applications , Vol. 185 , 2008, XXIX, 417 p.

\bibitem{Zhu-book} Zhu K. Operator theory in function spaces. Monographs and textbooks in pure and applied mathematics. -- New York: Marcel Dekker. -- 1990. -- 254 с.

\bibitem{Zhu-book2} Zhu K. Spaces of holomorphic functions in the unit ball. Graduate texts in Mathematics. -- Springer. -- 2004. -- 268 с.

\end{thebibliography}
\end{document}